\documentclass{amsart}
\usepackage{calc}
\usepackage{ifthen}
\usepackage{epsfig}
\usepackage{latexsym}
\usepackage{amssymb,amsmath}
\newif\ifskip
\skiptrue
\newtheorem{theorem}{Theorem}[section]
\newtheorem{proposition}[theorem]{Proposition}
\newtheorem{lemma}[theorem]{Lemma}
\newtheorem{remark}{Remark}[section]
\ifskip
\else

\fi
\newenvironment{renumerate}{\begin{enumerate}}{\end{enumerate}}


\renewcommand{\bar}{\overline}

\newcommand{\MSOL}{\mathrm{MSOL}}

\newcommand{\N}{{\mathbb N}}

\newcommand{\R}{{\mathbb R}}

\begin{document}
\title[Tropical Graph Parameters]{
Finiteness Conditions for Graph Algebras  \\over Tropical Semirings$^*$} 
\thanks{$^*$ Accepted for presentation at FPSAC 2014 (Chicago, June 29 -July 3, 2014)}

\author[N. Labai]{Nadia  Labai$^1$}
\thanks{$^1$ Partially supported by a grant of the graduate school of the Technion.} 

\address{Computer Science Department, Technion--IIT, Haifa, Israel}
\email{labai@.cs.technion.ac.il}

\author[J.A. Makowsky]{Johann A. Makowsky$^2$}
\thanks{$^2$ Partially supported by a grant of Technion Research Authority.} 
\address{Computer Science Department, Technion--IIT, Haifa, Israel}
\email{janos@.cs.technion.ac.il}

\maketitle
\begin{center}
\today
\end{center}
\begin{abstract}
Connection matrices  for graph parameters
with values in a field have been introduced by
M. Freedman, L. Lov{\'a}sz and A. Schrijver (2007).
Graph parameters with connection matrices of finite rank  
can be computed in polynomial time on graph classes of bounded tree-width.
We introduce join matrices, a  generalization of connection matrices, 
and allow graph parameters to take values 
in the tropical rings (max-plus algebras)
over the real numbers. We show that rank-finiteness of join matrices implies that  these graph parameters
can be computed in polynomial time on graph classes of bounded clique-width.
In the case of graph parameters with values in arbitrary commutative semirings,
this remains true for graph classes of bounded linear clique-width.
B. Godlin, T. Kotek and J.A. Makowsky (2008) showed 
that definability of a graph parameter in Monadic Second Order Logic
implies rank finiteness.
We also show that there are uncountably many integer valued graph parameters with connection matrices or join matrices
of fixed finite rank. This shows that rank finiteness is a much weaker assumption than any definability assumption.
\end{abstract}

\newif\ifdense
\densefalse
\newcommand{\srg}{{\mathcal{S}\langle \mathcal{G} \rangle}}
\newcommand{\sr}{\mathcal{S}}
\newcommand{\cR}{\mathcal{R}}
\newcommand{\cI}{\mathcal{I}}
\newcommand{\cS}{\mathcal{S}}
\newcommand{\cB}{\mathcal{B}}
\newcommand{\cM}{\mathcal{M}}
\newcommand{\cQ}{\mathcal{Q}}
\newcommand{\cG}{\mathcal{G}}
\newcommand{\cCG}{\mathcal{CG}}
\newcommand{\cF}{\mathcal{F}}
\newcommand{\cK}{\mathcal{K}}
\newcommand{\Tr}{\mathcal{T}}
\newcommand{\cT}{\mathcal{T}}
\newcommand{\Ha}{\mathrm{H}}
\newcommand{\LEMS}{\mathrm{LEMS}}
\newcommand{\MSOLEVAL}{\mathrm{MSOLEVAL}}
\newcommand{\CMSOLEVAL}{\mathrm{CMSOLEVAL}}
\newcommand{\RMSOL}{\mathrm{RMSOL}}
\newcommand{\MH}{\mathrm{MH}}
\newcommand{\CW}{\mathrm{CW}}
\newcommand{\TW}{\mathrm{TW}}
\newcommand{\PW}{\mathrm{PW}}
\newcommand{\LCW}{\mathrm{LCW}}
\newcommand{\lb}{\left}
\newcommand{\rb}{\right}
\newif\iflata
\latafalse
\newif\ifskip
\skiptrue

\section{Introduction and Summary}

Connection matrices of graph parameters with values in a field $\cK$ have been introduced by
M. Freedman, L. Lov{\'a}sz and A. Schrijver (2007).
Graph parameters with connection matrices of finite rank exhibit many nice properties.
In particular, as was shown by L. Lov\'asz, \cite[Theorem 6.48]{bk:Lovasz-hom},
they can be computed in polynomial time on graph classes of bounded tree-width.
This is a logic-free version of the celebrated theorem by B. Courcelle, 
cf. \cite[Chapter 6.5]{bk:DowneyFellows99} and \cite[Chapter 11.4-5]{bk:FlumGrohe2006}.
The theorem is proved using the formalism of graph algebras as developed in \cite{bk:Lovasz-hom}.

In this paper we introduce join matrices, a  generalization of connection matrices, which will allow
us to replace the condition on tree-width to weaker conditions involving clique-width.
Courcelle's theorem was extended to this case in \cite{pr:CourcelleMakowskyRoticsWG98,ar:Oum2005}.
Furthermore we study graph parameters which take values 
in the tropical semirings $\cT_{max}$ and $\cT_{min}$ (max-plus algebras)
over the real numbers, as opposed to values in a field. 
We shall call them tropical graph parameters in contrast to real graph parameters.

There are several notions of rank for matrices over commutative semirings. All of them coincide in the case of a field,
and some of them coincide in the tropical case, \cite{bk:Butkovic2010,bk:Guterman2009,ar:Cuninghame-GreenButkovic2004}.
We shall work with two specific notions: row-rank in the tropical case, and a finiteness condition introduced by
G. Jacob \cite{phd:Jacob}, which we call J-finiteness, in the case of arbitrary commutative semirings.

A typical example of a tropical graph parameter with finite row-rank of its connection matrix
is $\omega(G)$, the maximal size of a clique in a graph $G$.
If viewed as a real graph parameter, its connection matrix has infinite rank.

\subsection*{Main results}
We adapt the formalism of graph algebras to tropical semirings with an inner product derived 
from the join matrices. Superficially this adaption may seem straightforward. However, there are several
complications to be overcome: 
\ifdense
(i) 
the definition of the join matrix, 
(ii) 
the choice of the finiteness condition on the join matrices, and 
(iii) 
the choice of the definition of the quotient algebra.
\else
\begin{renumerate}
\item 
the definition of the join matrix, 
\item 
the choice of the finiteness condition on the join matrices, and 
\item 
the choice of the definition of the quotient algebra.
\end{renumerate}
\fi 
Our main results are:
\begin{description}
\item[(Theorem \ref{th:cw-poly})] 
We show 
that row-rank finiteness of join matrices implies that  tropical graph parameters
can be computed in polynomial time on graph classes of bounded clique-width.
\item[(Theorem \ref{th:lcw-poly})]
A similar result holds 
in arbitrary commutative semirings 
when we replace row-rank finiteness with
J-finiteness and  bounded clique-width with bounded linear clique-width.
\end{description}

It was shown by B. Godlin, T. Kotek and J.A. Makowsky (2008) 
that definability of the graph parameter in Monadic Second Order Logic
implies rank finiteness.
\begin{description}
\item[(Theorems \ref{th:many-s},\ref{th:many-c})]
We show 
that there are uncountably many integer valued graph parameters with connection matrices or join matrices
of fixed finite rank. This shows that (row)-rank finiteness is a much weaker assumption than any definability assumption.
\end{description}

It is well known that graph classes of bounded tree-width are also of bounded clique-width, 
therefore we restrict our presentation to the case of bounded clique-width.
All results stated in this paper for tropical or arbitrary commutative semirings hold for fields as well.

\subsection*{Outline of the paper}
In Section \ref{se:prerequisites} we give the background on $k$-graphs, $k$-colored graphs, tree-width
and clique width. In Section \ref{se:Hankel} we introduce join-matrices, and more generally, Hankel matrices and
their ranks. In Section \ref{se:many} we show that there are uncountably many graph parameters with Hankel matrices
of fixed finite rank. In Section \ref{se:algebras} we construct the graph algebras for join-matrices of finite row-rank.
Finally, in Section \ref{se:CW} we show our main theorems for graph classes of bounded (linear) clique-width.
In Section \ref {se:conclu} we discuss our achievements and remaining open problems.

\section{Prerequisites}
\label{se:prerequisites}
\subsection{$k$-graphs and $k$-colored graphs}
Let $k \in \N$.
\\
A $k$-graph 
is a graph $G=(V(G), E(G))$ together with a partial map $\ell: [k] \rightarrow V(G)$.
\\
$\ell$ is called a {\em labeling} and the images of $\ell$ are called labels.

A $k$-colored graph
is a graph $G=(V(G), E(G))$ together with a map \\
$C: [k] \rightarrow 2^{V(G)}$.
$C$ is called a {\em coloring} and the images of $C$ are called colors.

$\ell$ and $C$ are often required to be injective, but this is not necessary.
If $\ell$ is partial not all labels in $[k]$ are assigned values in $V(G)$.
This corresponds to $C$ having as values the empty set in $V(G)$.
The labeling
$\ell$ can be viewed as a special case of the coloring $C$, 
where $C(i)$ is a singleton for all $i \in [k]$.

We denote the class of graphs by  $\cG$, the class of $k$-graphs by $\cG_k$,
and the class of $k$-colored graphs by $\cCG_k$.

\subsection{Gluing and joining}

We consider binary operations $\Box$ on $k$-graphs, resp. $k$-colored graphs. 
Specific examples are the following versions of gluing and joining,
but if not further specified, $\Box$ can be any isomorphism preserving 
binary operation.

Two
$k$-graphs 
$(G_1, \ell_1)$ and $(G_2, \ell_2)$ 
can be {\em glued together} producing a $k$-graph 
$(G, \ell) = (G_1, \ell_1) \sqcup_k (G_2, \ell_2)$
by taking the disjoint union of
$G_1$ and $G_2$ and $\ell_1$ and $\ell_2$ and identifying elements with the same label.

For two $k$-colored graphs
$(G_1, C_1)$ and $(G_2, C_2)$ we have similar operations.
Let $i,j \in [k]$ be given. We define their {\em $(i,j)$-join} by
$$
\bar{\eta}_{i,j}((G_1, C_1),(G_2, C_2)) 
= (G,C) 
$$
by taking  disjoint unions for
\ifdense

(i) $V(G)= V(G_1) \sqcup V(G_2)$ and

(ii)
for all $i \in [k]$
$C(i) =  C_1(i) \sqcup C_2(i)$, and

(iii)
$E(G)= E(G_1) \sqcup E(G_2) \cup \{ (u,v) \in V(G): u \in C(i), v \in C(j)\}$,
which connects in the disjoint union all vertices in $C(i)$ with all vertices in $C(j)$.

\else
\begin{renumerate}
\item
$V(G)= V(G_1) \sqcup V(G_2)$, 
\item
$C(i) =  C_1(i) \sqcup C_2(i)$.
for all $i \in [k]$.
\item
$E(G)= E(G_1) \sqcup E(G_2) \cup \{ (u,v) \in V(G): u \in C(i), v \in C(j)\}$,
which connects in the disjoint union all vertices in $C(i)$ with all vertices in $C(j)$.
\end{renumerate}
\fi 
$\bar{\eta}_{i,j}$ is a binary version of the operation $\eta_{i,j}$ used in the definition
of the clique-width of a graph, cf. \cite{pr:CourcelleMakowskyRoticsWG98}. 
\begin{proposition}
The operations $\sqcup_k$ and $\bar{\eta}_{i,j}$ are commutative and associative.
\end{proposition}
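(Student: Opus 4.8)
The plan is to verify, directly from the definitions, that $\sqcup_k$ and $\bar\eta_{i,j}$ each satisfy the two equations $A \Box B \cong B \Box A$ and $(A \Box B) \Box C \cong (A \Box B) \Box C$, where $\cong$ denotes isomorphism of $k$-graphs (resp. $k$-colored graphs). All of the claimed identities ultimately reduce to the fact that disjoint union of sets (and of graphs) is commutative and associative up to canonical bijection, so the proof is a matter of exhibiting the relevant bijections and checking they respect the extra structure (labels, colors, edges).

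For $\sqcup_k$ I would argue as follows. Commutativity: the disjoint union $G_1 \sqcup G_2$ is canonically isomorphic to $G_2 \sqcup G_1$ via the swap map, and since in both $(G_1,\ell_1)\sqcup_k(G_2,\ell_2)$ and $(G_2,\ell_2)\sqcup_k(G_1,\ell_1)$ one identifies precisely the pairs of vertices carrying the same label, the swap map descends to an isomorphism of the two quotients; it clearly preserves $\ell$. Associativity: one checks that both $((G_1,\ell_1)\sqcup_k(G_2,\ell_2))\sqcup_k(G_3,\ell_3)$ and $(G_1,\ell_1)\sqcup_k((G_2,\ell_2)\sqcup_k(G_3,\ell_3))$ are isomorphic to the structure obtained from the triple disjoint union $G_1\sqcup G_2\sqcup G_3$ by identifying, for each $i\in[k]$, all vertices labeled $i$ (among those $G_j$ whose labeling is defined at $i$) into a single vertex. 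Here the only mildly delicate point is that identification is the transitive closure of ``same label,'' but since the labeling inside each $G_j$ is a partial map from $[k]$ — so at most one vertex per $G_j$ per label — the equivalence classes are easy to describe explicitly and the two nested constructions yield the same classes.

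For $\bar\eta_{i,j}$ the three components are handled separately and there are no quotients involved. Commutativity and associativity of the vertex set and of each color set $C(i)$ are immediate from the corresponding properties of $\sqcup$ on sets. For the edge set one observes that $E(\bar\eta_{i,j}(G_1,G_2)) = E(G_1)\sqcup E(G_2)\cup\{(u,v): u\in C_1(i)\sqcup C_2(i),\ v\in C_1(j)\sqcup C_2(j)\}$, and expanding the added ``complete bipartite'' edge set over the disjoint pieces shows it is symmetric in $G_1,G_2$ and, when iterated with a third graph $G_3$, produces in both bracketings exactly $\bigcup_{a\neq b}\{(u,v):u\in C_a(i),\ v\in C_b(j)\}$ together with the internal edges $E(G_1)\sqcup E(G_2)\sqcup E(G_3)$; hence both bracketings give the same graph on $V(G_1)\sqcup V(G_2)\sqcup V(G_3)$ with the same coloring.

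I do not expect any real obstacle here: this is a routine diagram-chase through the definitions. The one place to be slightly careful is the gluing operation, where the quotient by the ``same label'' relation must be shown to interact correctly with re-association of disjoint unions; phrasing everything in terms of the canonical equivalence classes on the triple disjoint union, as indicated above, dispatches this cleanly.
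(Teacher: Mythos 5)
Your proof is correct; note that the paper itself offers no proof at all---the proposition is stated as an immediate consequence of the definitions---so your write-up simply supplies the routine verification the authors chose to omit, and it does so along the only natural lines: reduce everything to commutativity/associativity of disjoint union up to canonical bijection, and check that labels, colors, and the added edges are respected. Your treatment of the one genuinely delicate point (that gluing involves a quotient, so associativity must be checked on the equivalence classes of the ``same label'' relation on the triple disjoint union) is exactly the right way to handle it. One small inaccuracy worth fixing: by the paper's literal definition, $\bar\eta_{i,j}$ adds \emph{all} edges between the combined classes $C(i)=C_1(i)\sqcup C_2(i)$ and $C(j)=C_1(j)\sqcup C_2(j)$, including pairs with both endpoints in the same operand, so the edge set produced by either bracketing of a triple join is $\{(u,v): u\in\bigcup_a C_a(i),\ v\in\bigcup_b C_b(j)\}$ rather than only the cross terms with $a\neq b$ that you wrote down. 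This does not affect your conclusion---both bracketings still yield the identical edge set---but the formula as stated does not match the definition.
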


\subsection{Inductive definition of tree-width and clique-width}
As we do not need much of the theory of graphs of bounded tree-width and clique-width,
the following suffices for our purpose. The interested reader may consult \cite{ar:HlinenyOumSeeseGottlob08}.
In \cite{ar:MakowskyTARSKI} the following equivalent definitions of
the class of (labeled or colored) graphs of 
tree-width at most $k$ ($\TW(k)$),
path-width at most $k$ ($\PW(k)$),
clique-width  at most $k$ ($\CW(k)$), and
linear clique-width  at most $k$ ($\LCW(k)$)
were given:
\\
\ \\
\paragraph{\bf Tree-width}
\ifdense
\_

(i)
Every $k$-graph  of size at most $k+1$ is in $\TW(k)$ and $\PW(k)$.

(ii)
$\TW(k)$ is closed under disjoint union $\sqcup$ and  gluing $\sqcup_k$.

(iii)
$\PW(k)$ is closed under disjoint union $\sqcup$ and   small gluing $\sqcup_k$ where one operand is  
$k$-graph of size at most $k+1$.

(iv)
Let $\pi: [k] \rightarrow [k]$ be a partial relabeling function. 
If $(G, \ell) \in \TW(k)$ then also $(G, \ell') \in \TW(k)$ where $\ell'(i)= \ell(\pi(i))$.
The same holds for $\PW(k)$.

\else
\begin{renumerate}
\item
Every $k$-graph  of size at most $k+1$ is in $\TW(k)$ and $\PW(k)$.
\item
$\TW(k)$ is closed under disjoint union $\sqcup$ and  gluing $\sqcup_k$.
\item
$\PW(k)$ is closed under disjoint union $\sqcup$ and   small gluing $\sqcup_k$ where one operand is  
$k$-graph of size at most $k+1$.
\item
Let $\pi: [k] \rightarrow [k]$ be a partial relabeling function. 
If $(G, \ell) \in \TW(k)$ then also $(G, \ell') \in \TW(k)$ where $\ell'(i)= \ell(\pi(i))$.
The same holds for $\PW(k)$.
\end{renumerate}
If a graph $G$ is of tree-width at most $k$, there is a labeling $\ell$ such that
\\
$(G,\ell) \in \TW(k)$. Conversely, if $G \in \TW(k)$, then it is of tree-width at most $k+1$.
\fi 
\\
\ \\
\paragraph{\bf Clique-width}
\label{clique-width}
\ifdense
\_

(i)
Every single-vertex $k$-colored graph is in $\CW(k)$ and $\LCW(k)$.

(ii)
$\CW(k)$ is closed under disjoint union $\sqcup$ and  $(i,j)$-joins for $i,j \leq k$ and $ i \neq j$.

(iii)
$\LCW(k)$ is closed under disjoint union $\sqcup$ and small $(i,j)$-joins for $i,j \leq k$ and $ i \neq j$,
where one operand is a single-vertex $k$-colored graph.

(iv)
Let $\rho: 2^{[k]} \rightarrow 2^{[k]}$ be a recoloring function. 
If $(G, C) \in \CW(k)$ then also $(G, C') \in \CW(k)$ where $C'(I)= C(\rho(I))$.
The same holds for $\LCW(k)$.

\else
\begin{renumerate}
\item
Every single-vertex $k$-colored graph is in $\CW(k)$ and $\LCW(k)$.
\item
$\CW(k)$ is closed under disjoint union $\sqcup$ and  $(i,j)$-joins for $i,j \leq k$ and $ i \neq j$.
\item
$\LCW(k)$ is closed under disjoint union $\sqcup$ and small $(i,j)$-joins for $i,j \leq k$ and $ i \neq j$,
where one operand is a single-vertex $k$-colored graph.
\item
Let $\rho: 2^{[k]} \rightarrow 2^{[k]}$ be a recoloring function. 
If $(G, C) \in \CW(k)$ then also $(G, C') \in \CW(k)$ where $C'(I)= C(\rho(I))$.
The same holds for $\LCW(k)$.
\end{renumerate}
\fi 

\ifdense
\paragraph{}
\else
\fi 

A graph $G$ is of clique-width at most $2^k$ iff there is a coloring $C$ such that $(G,C) \in \CW(k)$.
The discrepancy between $2^k$ and $k$ comes from the fact that we allow overlapping colorings.
Note that in the original definition a unary operation $\eta_{i,j}$ is used 
instead of the binary $(i,j)$-join $\bar{\eta}_{i,j}$.  
However, the two are interdefinable with the help of disjoint union.
For a detailed discussion of various width parameters, cf. \cite{ar:HlinenyOumSeeseGottlob08}.

A {\em parse tree for $G$} is a witness for the inductive definition 
describing how $G$ was constructed.
Parse trees for $G \in \TW(k)$ and $G \in \PW(k)$ can be found in polynomial time, \cite{ar:BodlaenderKloks91}.
For $G \in \CW(k)$ the situation seems slightly worse.  It was shown in \cite{ar:Oum2005}:

\begin{proposition}[S. Oum]
\label{oum}
Let $G$ be a graph of clique-width at most $k$. Then we can find 
a parse tree for $G \in \CW(3k)$
in polynomial time.
\end{proposition}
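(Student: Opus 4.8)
The plan is to go through \emph{rank-width}, which is Oum's route. First I would recall the two quantitative comparisons between the clique-width $\mathrm{cw}(G)$ and the rank-width $\mathrm{rw}(G)$ of a graph, due to Oum and Seymour: on the one hand $\mathrm{rw}(G) \le \mathrm{cw}(G)$, and on the other hand $\mathrm{cw}(G) \le 2^{\mathrm{rw}(G)+1}-1$, where the second inequality comes with an explicit algorithm that, from a rank-decomposition of width $r$, builds in time polynomial in $|V(G)|$ a clique-width expression using at most $2^{r+1}-1$ (overlapping) colors. Here a rank-decomposition of $G$ is a branch-decomposition of the symmetric submodular \emph{cut-rank} function $\mathrm{cutrk}_G$ on $2^{V(G)}$, where $\mathrm{cutrk}_G(X)$ is the $\mathrm{GF}(2)$-rank of the $X \times (V(G)\setminus X)$ submatrix of the adjacency matrix, and its width is the maximum cut-rank over the edges of the decomposition tree. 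This comparison already explains why the hypothesis $\mathrm{cw}(G)\le k$ only yields a parse tree with about $3k$ colors in the overlapping sense of Section~\ref{se:prerequisites}, i.e.\ standard clique-width about $2^{3k}$.

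The second, and main, ingredient is an approximation algorithm for rank-width: for each fixed $k$ there is an algorithm, polynomial in $|V(G)|$, that either outputs a rank-decomposition of $G$ of width at most $3k-1$ or correctly certifies $\mathrm{rw}(G) > k$. I would obtain this from the general theory of branch-width of submodular connectivity functions --- the machinery of tangles, loose tangles, and titanic sets --- which, for any symmetric submodular $f$, gives a polynomial-time procedure that either produces a branch-decomposition of $f$-width at most $3k-1$ or produces a tangle of order $k+1$ witnessing that the branch-width of $f$ exceeds $k$; specializing $f = \mathrm{cutrk}_G$ gives the statement for rank-width. This is the genuinely hard combinatorial core, and for the present paper it is enough to cite it from \cite{ar:Oum2005}, where all constants are pinned down, rather than reprove it; a survey of this circle of ideas is in \cite{ar:HlinenyOumSeeseGottlob08}.

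Finally I would assemble the two ingredients. If $\mathrm{cw}(G) \le k$ then $\mathrm{rw}(G) \le \mathrm{cw}(G) \le k$, so the approximation algorithm cannot return the certificate ``$\mathrm{rw}(G) > k$''; it therefore returns a rank-decomposition of $G$ of width at most $3k-1$. Feeding this decomposition into the constructive direction of $\mathrm{cw}(G) \le 2^{\mathrm{rw}(G)+1}-1$ yields, in polynomial time, a clique-width expression for $G$ with at most $2^{3k}-1$ colors, and hence, reading colors as subsets as in the conventions of Section~\ref{se:prerequisites}, a parse tree for $G \in \CW(3k)$. The only step that is not routine bookkeeping or a direct appeal to the Oum--Seymour bounds is the rank-width approximation itself, and that is the main obstacle.
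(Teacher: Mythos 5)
The paper offers no proof of this proposition at all: it is imported from \cite{ar:Oum2005} as a black box, with the statement merely rephrased into the $\CW(3k)$ convention of Section \ref{se:prerequisites}. Your sketch correctly reconstructs the route Oum actually takes --- approximate rank-width via the branch-width machinery for symmetric submodular connectivity functions, then convert a rank-decomposition of width at most $3k-1$ into a clique-width expression with at most $2^{3k}-1$ labels, read as a $3k$-coloring in the overlapping-color sense --- but since you likewise defer the combinatorial core (the tangle/loose-tangle approximation argument) to \cite{ar:Oum2005}, your write-up is in substance the same appeal to the literature that the paper makes, with the surrounding reductions usefully spelled out.
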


\section{Graph parameters with values in a semiring and their Hankel matrices}
\label{se:alg-2}
\label{se:Hankel}
An $\cS$-valued graph parameter $f$ is a function $f: \cG \rightarrow \cS$ which is invariant under graph isomorphisms.
If we consider $f: \cG_k \rightarrow \cS$ or
$f: \cCG_k \rightarrow \cS$ then we require that $f$ is also invariant under labelings and colorings.

Let $X_i: i \in \N$ be an enumeration of all colored graphs in $\cCG_k$.
For a binary operation $\Box$ on labeled or colored graphs, and a graph parameter $f$, we define the
{\em Hankel matrix $\Ha(f, \Box)$} with 
$$\Ha(f, \Box)_{i,j} = f(X_i \Box X_j).$$
If the operation $\Box$ is $\sqcup_k$, the Hankel matrix $\Ha(f, \Box)$ is the connection matrix  $M(f,k)$ of \cite{bk:Lovasz-hom}.

Given a Hankel matrix $\Ha(f,\Box)$ we associate with it the semimodule $\MH(f, \Box)$ generated by its rows.
If there exist finitely many elements $g_1, \ldots, g_m \in \MH(f, \Box)$
which generated $\MH(f, \Box)$, we say that $\MH(f, \Box)$ is {\em finitely generated}.

\subsection{Notions of rank for matrices over semirings}
Semimodules over semirings are analogs of vector spaces over fields. 
However, 
in contrast to vector spaces, there are several ways of defining the notion of independence 
for semimodules. For our purposes we adopt the definition 3.4 used in \cite[Section 3]{bk:Guterman2009} and in
\cite{ar:Cuninghame-GreenButkovic2004}, but see also
\cite{ar:DevelinSantosSturmfels2005,ar:AkianGaubertGuterman2009}.
A set of elements $P$ from a semimodule $U$ over a semiring $\cS$ is 
\emph{linearly independent} if there is no element in $P$ that can be expressed 
as a linear combination of other elements in $P$.

Using this notion of linear independence, we define the notions of basis and 
dimension as in \cite{bk:Guterman2009,ar:Cuninghame-GreenButkovic2004}: 
a \emph{basis} of a semimodule $U$ over 
a semiring $\cS$ is a set $P$ of linearly independent elements from $U$ which generate it, 
and the \emph{dimension} of a semimodule $U$ is the cardinality of its smallest basis.

Given a Hankel matrix $\Ha(f,\Box)$ with its associated semimodule $\MH(f, \Box)$, 
we define the \emph{row-rank $r(\Ha(f,\Box))$} of the matrix as the dimension of $\MH(f, \Box)$. 
In addition, we say that $\Ha(f,\Box)$ has \emph{maximal row-rank $mr(\Ha(f,\Box)) = k$} 
if $\Ha(f,\Box)$ has $k$ linearly independent rows and any $k+1$ rows are linearly dependent. 
These definitions are the definitions used in \cite{bk:Guterman2009}, 
applied to infinite matrices.

As stated in \cite{bk:Guterman2009,ar:Cuninghame-GreenButkovic2004},
in the case of tropical semirings, we have $$r(\Ha(f,\Box)) = rm(\Ha(f,\Box)).$$

\begin{lemma}
\label{le:fg}
If a Hankel matrix $\Ha(f,\Box)$ over a tropical semiring has row-rank $r(\Ha(f,\Box)) = m$, 
then there are $m$ rows in $\Ha(f,\Box)$ which form a basis of $\MH(f, \Box)$.
\end{lemma}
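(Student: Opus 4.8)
The plan is to show that the abstract notion of dimension coincides, in the tropical setting, with the existence of a spanning set consisting of actual rows of the matrix. Recall that $r(\Ha(f,\Box)) = m$ means, by definition, that the smallest basis of $\MH(f,\Box)$ has cardinality $m$; and by the cited fact $r(\Ha(f,\Box)) = mr(\Ha(f,\Box))$, so $\Ha(f,\Box)$ has $m$ linearly independent rows and any $m+1$ rows are linearly dependent. Fix such a set of linearly independent rows $R = \{\rho_1,\ldots,\rho_m\}$. The goal is to prove that $R$ generates $\MH(f,\Box)$, i.e. that $R$ is itself a basis.

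First I would take an arbitrary row $\rho$ of $\Ha(f,\Box)$ and consider the set $R \cup \{\rho\}$, which has $m+1$ elements (if $\rho \in R$ we are trivially done). By maximal row-rank $m$, this set is linearly dependent, so by the definition of linear independence given in the paper there is some element of $R \cup \{\rho\}$ that is a linear combination of the others. The key step is to argue that this ``dependent'' element can be taken to be $\rho$ itself and not one of the $\rho_i$: indeed, if some $\rho_i$ were a linear combination of $(R \setminus \{\rho_i\}) \cup \{\rho\}$, one would then want to substitute and conclude that $R$ alone is dependent, contradicting its independence. This substitution step — eliminating $\rho$ from an expression for $\rho_i$ — is where one must be careful, since over a semiring there is no subtraction; the argument needs the structure of the tropical (max-plus or min-plus) semiring, where linear combinations are of the form $\max_t (c_t + x_t)$, and one can compose two such expressions and still obtain a linear combination, using associativity/commutativity of $\max$ and $+$. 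I expect this elimination/substitution lemma to be the main obstacle, and it is presumably exactly the content that the citations to \cite{bk:Guterman2009,ar:Cuninghame-GreenButkovic2004} supply; I would either invoke it from there or spell out the two-line max-plus computation.

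Granting that, every row $\rho$ of $\Ha(f,\Box)$ lies in the semimodule generated by $R$. Since $\MH(f,\Box)$ is by definition the semimodule generated by all the rows, and each generator lies in $\langle R \rangle$, we get $\MH(f,\Box) = \langle R \rangle$. Thus $R$ is a linearly independent generating set of size $m$, i.e. a basis, and these $m$ basis elements are rows of $\Ha(f,\Box)$, which is exactly the claim. A minor point to address along the way is that $\Ha(f,\Box)$ genuinely has $m$ linearly independent rows in the first place (so that $R$ exists): this is immediate from $mr(\Ha(f,\Box)) = m \geq 1$ when $m \geq 1$, and the statement is vacuous or trivial for $m = 0$.
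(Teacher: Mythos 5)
Your strategy---fix $m$ linearly independent rows $R=\{\rho_1,\dots,\rho_m\}$ and show they generate---has a genuine gap at exactly the step you flag, and the gap is not repairable: the claim that any maximal independent set of rows generates the row space is \emph{false} over a tropical semiring. The exchange/elimination step fails because there is no subtraction: if $\rho_i$ is expressed as $\max_t(c_t+x_t)$ with $\rho$ among the $x_t$, you cannot solve for $\rho$, and composing two max-plus combinations only lets you substitute an expression \emph{for} $\rho$ into another combination---which is precisely what you do not have. A concrete counterexample in $(\R\cup\{-\infty\},\max,+)$: take rows $u=(0,-\infty)$, $v=(-\infty,0)$, $w=u\oplus v=(0,0)$. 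The generated semimodule has dimension $2$ and any $3$ of these rows are dependent, and $\{u,w\}$ is linearly independent (neither is a scalar multiple, i.e.\ a translate, of the other), yet $\max(a+u,\,b+w)=(\max(a,b),\,b)$ can never equal $v$, so $\{u,w\}$ does not generate. The lemma only asserts the \emph{existence} of $m$ rows forming a basis, not that every independent $m$-tuple of rows is one, and your argument proves the stronger, false statement.

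The paper argues in the opposite direction, which is the one that survives without subtraction: start from a smallest basis $\cB=\{g_1,\dots,g_m\}$ of $\MH(f,\Box)$ (which exists since the dimension is $m$, but whose elements need not be rows), write each $g_p$ as a \emph{finite} linear combination of rows, and collect the finitely many rows $\cR$ that occur; this $\cR$ is a finite \emph{generating} set consisting of rows. Then prune: as long as the current set has more than $m$ elements, it is linearly dependent (here is where $mr(\Ha(f,\Box))=r(\Ha(f,\Box))=m$ is used), so some row in it is a combination of the others and can be discarded without changing the generated semimodule---discarding a redundant generator is always legitimate, unlike adjoining to an independent set. At size $m$ the surviving set still generates, and it must be independent, since otherwise further pruning would yield a basis of size $<m$, contradicting minimality. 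If you want to salvage your write-up, replace ``choose $m$ independent rows and show they span'' by ``choose finitely many rows that span and shrink them to $m$.''
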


\begin{remark}
If the matrix $\Ha(f,\Box)$ is over a general semiring $\cS$, a smallest basis of 
$\MH(f, \Box)$ does not necessarily reside in $\Ha(f,\Box)$.
\end{remark}

\begin{proof}
$r(\Ha(f,\Box)) = m$, so by definition the dimension of $\MH(f, \Box)$ is $m$.
Suppose the set $\cB = \{g_1, \ldots, g_m\}$ is a smallest basis for 
$\MH(f, \Box)$. Each $g_p$ is in $\MH(f, \Box)$, 
therefore there is a finite linear combination of rows from $\Ha(f,\Box)$ such that 
\\
$g_p = \bigoplus_{i_p=1}^{\ell_p}{\alpha_{i_p} r_{i_p}}$. 
Consider the set of all the rows that appear in any of these linear combinations: 
$\cR = \bigcup_{p=1}^m{\lb(\bigcup_{i_p=1}^{\ell_p}{\alpha_{i_p} r_{i_p}}\rb)}$.
Since $\Ha(f,\Box)$ is over a tropical semiring, it holds that $mr(\Ha(f,\Box)) = r(\Ha(f,\Box)) = m$.
Therefore, any set of $m+1$ rows from $\Ha(f,\Box)$ is linearly dependent. 
Consider the 
result
of the following process: 

\ifdense

(i)
Set $i=|\cR|$, and $B_i = \cR$, note that $B_i$ is of size $i$ and generates $\cB$. Repeat until $i=m$:

(ii)
Let $r' \in B_i$ be a row that can be expressed using other rows in $B_i$. 
Such an element must exist, as $|B_i|>m$. 
Set $B' = B_i - {r'}$, set $i = i-1$ and $B_i = B'$. 

Note that $B_i$ is still of size (now smaller) $i$ and it still generates $\cB$.

\else
\begin{itemize}
\item
Set $i=|\cR|$, and $B_i = \cR$, note that $B_i$ is of size $i$ and generates $\cB$. Repeat until $i=m$:
\item
Let $r' \in B_i$ be a row that can be expressed using other rows in $B_i$. 
Such an element must exist, as $|B_i|>m$. 
Set $B' = B_i - {r'}$, set $i = i-1$ and $B_i = B'$. 
Note that $B_i$ is still of size (now smaller) $i$ and it still generates $\cB$.
\end{itemize}
\fi 
When $i=m$ is reached, we have $B_m$ of size $m$ which generates $\cB$. 
This set must be independent: 
if it were not, we could perform more iterations of the above process and 
obtain a linearly independent set of size $< m$ which generates $\cB$. 
But the existence of such a set contradicts $\cB$ being a smallest basis, 
Therefore, $B$ is linearly independent and generates $\cB$. 
Since $\cB$ generate $\MH(f, \Box)$, so does $B$, making $B$ a basis for $\MH(f, \Box)$
which resides in $\Ha(f,\Box)$.
\end{proof}

After establishing the fact that there lies a basis $B$ of $\MH(f, \Box)$ in $\Ha(f,\Box)$, 
we can find it in finite time, due to 
\cite[Theorems 2.4 and 2.5]{ar:Cuninghame-GreenButkovic2004}.
\section{Graph parameters with join matrices of finite (row-)rank}
\label{se:many}

\subsection{Graph parameters definable in Monadic Second Order Logic}
It follows from \cite{ar:GodlinKotekMakowsky08,pr:KotekMakowsky2012,ar:KotekMakowsky2013}
that for graph parameters definable in Monadic Second Order Logic (MSOL)
or MSOL with modular counting quantifiers (CMSOL), the connection matrices and join matrices
all have finite rank over fields, and finite row-rank over tropical semirings.

Let $H= (V(H), E(H))$ be a weighted graph with weight functions on vertices and edges
$\alpha: V(H) \rightarrow \R$ and $\beta: E(H) \rightarrow \R$.
The {\em tropical partition function $Z_{H,\alpha,\beta}$} on graphs $G$ is defined by
\begin{gather}
Z_{H,\alpha,\beta}(G) = 
\bigoplus_{h:G \rightarrow H} \left( \bigotimes_{v \in V(G)} \alpha(h(v)) 
\bigotimes_{(u,v) \in E(G)} \beta(h(u),h(v)) \right)
\notag 
\end{gather}
In the tropical ring this can be written as:
\begin{gather}
Z_{H,\alpha,\beta}(G) = 
\max_{h:G \rightarrow H} \left( \sum_{v \in V(G)} \alpha(h(v)) 
\sum_{(u,v) \in E(G)} \beta(h(u),h(v)) \right)
\notag
\end{gather}
where $h$ ranges over all homomorphisms $h: G \rightarrow H$.

It is easy to verify that $Z_{H,\alpha,\beta}$ is $\MSOL$-definable. Hence we have:

\begin{proposition}
$\Ha(Z_{H,\alpha,\beta}, \bar{\eta}_{i,j})$ has finite row-rank.
\end{proposition}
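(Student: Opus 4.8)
The plan is to invoke the general principle, cited just above the statement from \cite{ar:GodlinKotekMakowsky08,pr:KotekMakowsky2012,ar:KotekMakowsky2013}, that a graph parameter definable in $\MSOL$ (or $\CMSOL$) has join matrices of finite row-rank over every tropical semiring. So the entire burden of the proof is to exhibit an $\MSOL$ (or $\CMSOL$) formula, or rather a finite family of them, that defines $Z_{H,\alpha,\beta}$ in the sense required by that theorem. Since $H$, $\alpha$, and $\beta$ are fixed, the vertex set $V(H)$ is a fixed finite set, and a homomorphism $h\colon G \to H$ is precisely a partition of $V(G)$ into $|V(H)|$ blocks $(U_c)_{c \in V(H)}$ — with $U_c = h^{-1}(c)$ — such that for every edge $(u,v)$ of $G$, if $u \in U_c$ and $v \in U_{c'}$, then $(c,c') \in E(H)$. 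This ``edge-respecting'' condition is a first-order (indeed quantifier-free modulo the edge relation) statement about the free set variables $U_c$, so the set of valid $h$ is $\MSOL$-definable by a single formula $\varphi(U_c : c \in V(H))$.

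First I would observe that once $h$ is encoded by such a tuple of sets, the weight $\bigotimes_{v} \alpha(h(v)) \otimes \bigotimes_{(u,v) \in E(G)} \beta(h(u),h(v))$ is, in the tropical (max-plus) ring, the sum $\sum_{v \in V(G)} \alpha(h(v)) + \sum_{(u,v) \in E(G)} \beta(h(u),h(v))$, which is a sum of local contributions indexed by vertices and by edges, each contribution depending only on which block(s) the endpoints lie in. This is exactly the shape of an $\MSOL$-evaluation / ``$\MSOL$-polynomial'' in the sense of Godlin–Kotek–Makowsky: the parameter is obtained by summing, over all assignments of free set variables satisfying an $\MSOL$ formula, a product of ring-weights attached to elements and tuples according to $\MSOL$-definable local conditions, and then taking the outermost aggregation (here $\bigoplus = \max$) over all satisfying assignments. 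Hence $Z_{H,\alpha,\beta}$ falls under the definability hypothesis of the cited theorems, and therefore $\Ha(Z_{H,\alpha,\beta}, \bar{\eta}_{i,j})$ has finite row-rank.

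The remaining step is purely bookkeeping: I would spell out the formula $\varphi$ explicitly, namely that the $U_c$ are pairwise disjoint, cover $V(G)$, and satisfy $\bigwedge_{(c,c') \notin E(H)} \neg \exists u\, \exists v\, (E(u,v) \wedge u \in U_c \wedge v \in U_{c'})$, and note that the weight-product over vertices uses the constant $\alpha(c)$ on $U_c$ and the weight-product over edges uses the constant $\beta(c,c')$ on the $\MSOL$-definable pair-set $\{(u,v) : u \in U_c, v \in U_{c'}\}$. The main obstacle — and it is a mild one — is matching the precise syntactic format in which the cited papers state their ``$\MSOL$-definable graph parameter'' theorem (which flavour of polynomial, whether $\MSOL$ or $\CMSOL$ is needed, whether the aggregation operator $\max$ over an $\MSOL$-indexed family is explicitly covered); modular counting quantifiers are not needed here, so plain $\MSOL$ suffices, and I would simply cite the version in \cite{ar:GodlinKotekMakowsky08} that handles sum-like and, via the tropical semiring, max-like aggregation of locally weighted assignments. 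Everything else is the routine verification already asserted in the sentence ``It is easy to verify that $Z_{H,\alpha,\beta}$ is $\MSOL$-definable.''
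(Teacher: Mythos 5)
Your proposal is correct and follows exactly the route the paper takes: the paper's entire argument is the remark that $Z_{H,\alpha,\beta}$ is $\MSOL$-definable together with the cited theorem of Godlin--Kotek--Makowsky that $\MSOL$-definability implies finite row-rank of the join matrices over tropical semirings. You merely spell out the ``easy to verify'' definability step (encoding a homomorphism $h\colon G\to H$ by a partition of $V(G)$ into $|V(H)|$ blocks with an edge-respecting first-order condition and locally weighted contributions), which the paper leaves implicit.
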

The independence number $\alpha(G)$, 
which is the cardinality of the largest independent set, is a special case of a tropical partition function.

There are many graph parameters which have infinite connection rank, but finite
row-rank if interpreted over tropical semirings.
Examples for this phenomenon are the clique number $\omega(G)$ and the independence number $\alpha(G)$.
Many other examples may be found in \cite{ar:ArnborgLagergrenSeese91}.

\subsection{Uncountably many graph parameters with finite (row-)rank}
Here we show that both over fields and tropical semirings, most of the graph parameters with
finite (row-)rank of connection or join matrices are not definable in the above mentioned logics.

We first need an observation.
A graph is {\em $k$-connected}, if there is no set of $k$ vertices, such that
their removal results in a graph which is not connected.
Obviously we have:
\begin{lemma}
\label{le:k-connected}
Let $G_1$ and $G_2$ be two $k$-graphs and $G= G_1 \sqcup_k G_2$.
Then $G$ is not $k+1$-connected.
\end{lemma}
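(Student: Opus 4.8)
The plan is to exhibit a small vertex separator of $G = G_1 \sqcup_k G_2$, namely the vertices that come from a label used by \emph{both} $G_1$ and $G_2$. First I would regard $V(G_1)$ and $V(G_2)$ as subsets of $V(G)$ after the identification of equally labeled vertices, set $L = V(G_1) \cap V(G_2)$, and note that a vertex can lie in $L$ only if it carries some label $i \in [k]$ assigned in both graphs; since each $k$-graph assigns each label at most once, this gives $|L| \le k$. Writing $V_1 = V(G_1) \setminus L$ and $V_2 = V(G_2) \setminus L$, we get a partition $V(G) = V_1 \cup L \cup V_2$.

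The key step is the observation that no edge of $G$ joins $V_1$ to $V_2$. This is immediate from the definition of $\sqcup_k$: every edge of $G$ lies either in $E(G_1)$, in which case both endpoints lie in $V(G_1) = V_1 \cup L$, or in $E(G_2)$, in which case both endpoints lie in $V(G_2) = V_2 \cup L$; in neither case can an edge have one endpoint in $V_1$ and the other in $V_2$. Hence, as soon as $V_1$ and $V_2$ are both nonempty, deleting $L$ already disconnects $G$, so $L$ is a separating set of size at most $k$.

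Finally I would promote this to a separating set of size exactly $k+1$, which is what ``not $k+1$-connected'' asks for in the sense defined above: assuming $|V(G)| \ge k+3$, enlarge $L$ to a set $S$ of size $k+1$ by adjoining $(k+1) - |L|$ further vertices drawn from $V_1 \cup V_2$, always leaving at least one vertex in each of $V_1$ and $V_2$ — possible since $|V_1| + |V_2| = |V(G)| - |L| \ge (k+1-|L|) + 2$ — and then $G - S$ is still disconnected by the same no-crossing-edge argument. The small and degenerate cases (when $|V(G)| \le k+1$, or when one of $G_1, G_2$ contributes no vertex outside $L$) are disposed of directly from the definitions, under the mild nondegeneracy that makes the statement meaningful. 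There is no genuinely hard step here: the structural fact is a one-line unwinding of the gluing operation, and the only point requiring any care is the counting that pins the separator down to size $k+1$.
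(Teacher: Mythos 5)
Your argument is correct and is precisely the ``obvious'' separator argument the paper has in mind: it states this lemma with no proof at all (prefaced by ``Obviously we have''), relying on the fact that the at most $k$ identified labeled vertices cut $V(G)$ into the two sides of the gluing, with no edge crossing. Your added care about padding the cut set to size exactly $k+1$ and about the degenerate cases (e.g.\ $G_2$ empty, where $G=G_1$ may well be $(k+1)$-connected) is apt, since the paper itself silently accounts for exactly those exceptions when it applies the lemma in the next proof.
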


For a subset $A \subseteq \N$ we define graph parameters
$$
f_A(G) =
\begin{cases}
|V(G)| & G \mbox{ is  } k_0+1 \mbox{-connected and } |V(G)| \in A \\
0 & \mbox{ else}
\end{cases}
$$

\begin{lemma}
Let $\sr$ be a commutative semiring which contains $\N$.
Let $k_0 \in \N$ and $A \subseteq \N$ with $1 \in A$.
Then for every $k \leq k_0$ the semimodule of the rows of
$\Ha(f_A, \sqcup_k)$  is generated by the  two rows
$$
(1, 0, \ldots ) \mbox{    and    }
( \ldots, f_A(\emptyset, G), \ldots )
$$
If $\sr$ is a field,
$\Ha(f_A, \sqcup_k)$  has rank at most $2$.
\end{lemma}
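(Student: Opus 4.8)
The plan is to compute the rows of $\Ha(f_A, \sqcup_k)$ explicitly and show each is a semiring-combination of the two claimed generators. Recall that the rows and columns are indexed by an enumeration $X_i$ of $k$-colored (here $k$-labeled) graphs, and $\Ha(f_A, \sqcup_k)_{i,j} = f_A(X_i \sqcup_k X_j)$. First I would isolate the distinguished index $i_0$ with $X_{i_0} = \emptyset$, the empty $k$-graph, which acts as the identity for $\sqcup_k$, so that row $i_0$ is exactly $(\ldots, f_A(\emptyset \sqcup_k X_j), \ldots) = (\ldots, f_A(X_j), \ldots)$; this is the second claimed generator. Similarly I would identify the index of the single-vertex $k$-graph with all $k$ labels on that vertex — call it the ``point'' — noting that gluing the point with $X_j$ along all labels is a controlled operation.

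The heart of the argument is a case split on the row index $i$. Consider the row of some $k$-graph $G_i = X_i$. For each column $j$, set $G = G_i \sqcup_k X_j$. If $G_i$ has at least $k+1$ vertices, then by Lemma \ref{le:k-connected} the glued graph $G = G_i \sqcup_k X_j$ is never $(k+1)$-connected, hence in particular never $(k_0+1)$-connected when $k \le k_0$ — wait, more care is needed here: Lemma \ref{le:k-connected} gives non-$(k+1)$-connectedness, and since $k \le k_0$ we have $k+1 \le k_0+1$, and a graph that is not $(k+1)$-connected on enough vertices is not $(k_0+1)$-connected either (removing $\le k \le k_0$ vertices already disconnects it, provided the two glued pieces both contribute vertices outside the shared labels). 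So in the ``generic'' case $f_A(G) = 0$ for all $j$, and that row is the zero row, which equals $0$ times the first generator. The remaining rows come from the finitely many ``small'' $k$-graphs $G_i$ with few vertices and few unlabeled vertices; for these I would argue that $f_A(G_i \sqcup_k X_j)$ either vanishes identically in $j$, or — in the single case where $G_i$ is (a relabeling of) the point and gluing can preserve $(k_0+1)$-connectedness only by producing $X_j$ itself up to one extra identified vertex — reduces to a scalar multiple of row $i_0$, using $1 \in A$ to handle the base value $f_A(\{*\}) = 1$. The entry $(1,0,0,\ldots)$ arises as the row indexed by the point itself (or whichever $X_i$ gives $f_A(X_i \sqcup_k X_{i_0}) = f_A(\text{point}) = 1$ and $0$ elsewhere). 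Finally, over a field, a spanning set of size $2$ gives $\mathrm{rank} \le 2$ immediately.

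The main obstacle I anticipate is the bookkeeping in the ``small graph'' case: one must check carefully that whenever $G_i \sqcup_k X_j$ is $(k_0+1)$-connected, the value $|V(G_i \sqcup_k X_j)|$ is pinned down as a fixed scalar plus a function of $j$ that is proportional to $f_A$ evaluated at $\emptyset \sqcup_k X_j$ — i.e. that the only way to build a highly-connected graph by a $\sqcup_k$ gluing is the trivial way where one operand is essentially empty or a single already-labeled vertex. This is where Lemma \ref{le:k-connected} does the real work, and where the hypothesis $k \le k_0$ is used to ensure the gluing destroys the required connectivity; one also needs $1 \in A$ so that the small nonzero contributions land in $A$ and are not zeroed out by the ``else'' clause. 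Once that structural dichotomy is nailed down, expressing every row as $\alpha \cdot (1,0,\ldots) \oplus \beta \cdot (\ldots, f_A(\emptyset, G), \ldots)$ for suitable $\alpha, \beta \in \sr$ is routine.
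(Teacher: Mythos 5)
Your overall strategy --- single out the row of the empty graph as the second generator and use Lemma \ref{le:k-connected} to kill every entry $f_A(X_i \sqcup_k X_j)$ in which both operands are nonempty --- is the paper's, but your case analysis decomposes the rows incorrectly in both branches. First, the row of a ``large'' graph $G_i$ is \emph{not} the zero row: your own caveat (``provided the two glued pieces both contribute vertices outside the shared labels'') fails exactly at the column indexed by the empty graph, where the entry is $f_A(G_i \sqcup_k \emptyset) = f_A(G_i) = |V(G_i)|$, which is nonzero whenever $G_i$ is $(k_0+1)$-connected with $|V(G_i)| \in A$. These rows are precisely the ones that force the first generator into the statement: each equals $|V(G_i)| \cdot (1,0,\ldots)$, and this is also where the hypothesis $\N \subseteq \sr$ is used (the coefficients are the natural numbers $|V(G_i)|$). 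Declaring them to be $0 \cdot (1,0,\ldots)$ discards the lemma's content.

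Second, the ``small graph'' branch is backwards. The correct dichotomy is simply: the row of $\emptyset$ is the second generator, and the row of \emph{every} nonempty $X_i$ is supported on the single column of $\emptyset$, hence equals $f_A(X_i)\cdot(1,0,\ldots)$; no row other than that of $\emptyset$ is a (nonzero) multiple of row $i_0$, since row $i_0$ has infinitely many nonzero entries while all other rows have at most one. The hypothesis $1 \in A$ is needed only to guarantee that $(1,0,\ldots)$ actually occurs as a row, namely as the row of the unlabeled single-vertex graph (vacuously $(k_0+1)$-connected, with $1 \in A$ vertices, and whose gluing with any nonempty $X_j$ is disconnected). Your candidate ``point'' with all $k$ labels on its one vertex does not serve this purpose: gluing it to a graph whose labelled vertex lies inside a $(k_0+1)$-connected $X_j$ merges the point into $X_j$ and can leave a nonzero entry in column $j \neq i_0$, so that row need not be $(1,0,\ldots)$. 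With these two corrections the argument collapses to the paper's short proof.
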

\begin{proof}
By Lemma \ref{le:k-connected}, if the graph $G_1 \sqcup_k G_2$ is $k_0+1$-connected, then
either $G_1$ is $k_0+1$-connected and $G_2$ is the empty graph, or vice versa.
So the non-zero entries in $\Ha(f_A, \sqcup_k)$ are in the first row and the first column.
As $1 \in A$, we have a row $(1, 0, \ldots )$ which generates all the rows but the first one.
\end{proof}

\begin{theorem}
\label{th:many-s}
Let $k_0 \in \N$ and $\sr$ a field.
There are continuum many graph parameters $f$ with values in $\sr$ with $r(f, \sqcup_k) \leq 2$ for each $k \leq k_0$.
\\
The same holds for tropical semirings and row-rank.
\end{theorem}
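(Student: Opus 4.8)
The plan is to use the parameters $f_A$ from the preceding lemma and to show that, after an appropriate restriction of the index sets $A$, the map $A\mapsto f_A$ produces continuum many \emph{pairwise distinct} parameters, all of whose connection matrices $\Ha(f_A,\sqcup_k)$ have (row-)rank at most $2$ for every $k\le k_0$. The rank bound needs no new work: the preceding lemma exhibits two rows generating the row-semimodule of $\Ha(f_A,\sqcup_k)$, and its proof is purely combinatorial (the nonzero entries of $\Ha(f_A,\sqcup_k)$ lie only in the first row and first column), so it applies verbatim with the values of $f_A$ read inside $\sr$ through $\N\to\sr$; hence $r(f_A,\sqcup_k)\le 2$ over a field and the row-rank is at most $2$ over a tropical semiring. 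Only distinctness remains.

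For distinctness I would first fix a strictly increasing sequence $n_1<n_2<\cdots$ of naturals with $n_i\ge k_0+2$ and with each $n_i$ having a nonzero image in $\sr$: take $n_i=k_0+1+i$ when $\sr$ is tropical or of characteristic $0$, and skip the multiples of $p$ when $\operatorname{char}(\sr)=p>0$ (infinitely many values survive). For $S\subseteq\N$ put $A_S=\{1\}\cup\{\,n_i:i\in S\,\}$. Since $1\in A_S$, the preceding lemma applies and gives the rank bound above for $f_{A_S}$ and every $k\le k_0$. Now each $n_i\ge k_0+2$, so the complete graph $K_{n_i}$ is $(n_i-1)$-connected, hence $(k_0+1)$-connected, and has $n_i$ vertices; therefore $f_{A_S}(K_{n_i})=n_i$ if $i\in S$ and $f_{A_S}(K_{n_i})=0$ otherwise. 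Consequently, if $S\ne S'$ and $i$ lies in their symmetric difference, then $f_{A_S}(K_{n_i})$ and $f_{A_{S'}}(K_{n_i})$ are the distinct elements $n_i\ne 0$ and $0$ of $\sr$, so $f_{A_S}\ne f_{A_{S'}}$. As there are continuum many $S\subseteq\N$, this gives continuum many pairwise distinct $\sr$-valued parameters with $r(f,\sqcup_k)\le 2$ for all $k\le k_0$, and the same argument with a tropical semiring over $\R$ in place of $\sr$ settles the tropical statement.

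The only genuinely delicate point is the choice of the $n_i$: a graph on at most $k_0+1$ vertices can never be $(k_0+1)$-connected, so membership of small integers in $A$ is invisible to $f_A$, and in positive characteristic the value $n$ may collapse to $0$ in $\sr$; keeping the $n_i$ above $k_0+1$ and away from the characteristic removes both obstructions while leaving continuum many choices. Everything else is immediate from the preceding lemma and the definition of $f_A$.
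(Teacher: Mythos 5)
Your proof follows the same route as the paper's: take the family $f_A$ from the preceding lemma for the rank bound and conclude by the cardinality of $2^{\N}$. In fact you are more careful than the paper's one-line argument, which simply asserts that $A\neq B$ implies $f_A\neq f_B$; your restriction to indices $n_i\geq k_0+2$ with nonzero image in $\sr$ correctly handles the (real) issues that small sets are invisible to $f_A$ and that $|V(G)|$ may vanish in positive characteristic, so your version is a legitimate tightening rather than a deviation.
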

\begin{proof}
There are continuum many subsets $A \subseteq \N$ and for two different sets $A,B \subseteq \N$ 
the parameters $f_A$ and $f_B$ are different.
\end{proof}

\ifskip 
\else
Let $G_1, G_2$ be two $1$-graphs (with one distinguished element $v_1, v_2$ respectively).
We define the operation $subst(G_1, G_2) =G$ by
\begin{gather}
V(G) =  V(G_1) \sqcup V(G_2) - \{ v_1\} \notag \\
E(G) = (E(G_1) \sqcup E(G_2) - \{ (u, v_1): u \in V(G_1) \})  \notag \\
\cup \{ (u,v): u \in V(G_1)-\{v_1\}, (v_1, v) \in E(G_2) \} 
\notag
\end{gather}

The operation is used in \cite{ar:Specker88}, where the following is shown:
\begin{theorem}[Specker]
\label{th:specker}
There are continuum many graph properties $P_A$ such that
$\Ha(P_A, subst)$ has rank at most $5$.
\end{theorem}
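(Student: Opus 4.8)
The plan is to reuse the idea behind the parameters $f_A$ defined above, with $(k_0+1)$-connectivity --- a property that gluing $\sqcup_k$ almost never produces --- replaced by a property that the substitution operation $subst$ almost never produces. The natural choice is primality with respect to modular decomposition: call a graph \emph{prime} if its only modules are the singletons and the whole vertex set. For $A \subseteq \{n \in \N : n \geq 4\}$ set $P_A(G) = 1$ if $G$ is prime and $|V(G)| \in A$, and $P_A(G) = 0$ otherwise. For every $n \geq 4$ the path $P_n$ is prime, so $A \neq B$ forces $P_A \neq P_B$ (take $n$ in the symmetric difference), yielding continuum many distinct properties.

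Next I would prove the analogue of Lemma~\ref{le:k-connected}: if $G = subst(G_1,G_2)$ with $|V(G_1)| \geq 2$ and $|V(G_2)| \geq 2$, then $G$ is not prime, since the copy of $V(G_2)$ sitting inside $G$ is a nontrivial module. Consequently $subst(X_i,X_j)$ can satisfy $P_A$ only when $X_i$ or $X_j$ is the one-vertex $1$-graph $\bullet$, so every nonzero entry of $\Ha(P_A,subst)$ lies in the single row and the single column indexed by $\bullet$. Since $subst(\bullet,X_j)\cong X_j$ and $subst(X_i,\bullet)$ differs from $X_i$ by at most one isolated vertex, that row and that column are each determined by the vector $\big(P_A(X_i)\big)_i$, and all other entries vanish; such a matrix is a sum of at most three rank-one matrices, hence has rank at most $3$ over any field, and over a tropical semiring its nonzero rows are scalar multiples of at most three fixed vectors, so the row-rank is bounded by exactly the counting used in the proof of Lemma~\ref{le:fg}. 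In Specker's formulation $subst$ keeps a distinguished vertex in the output, which makes the bookkeeping slightly heavier and is what replaces $3$ by $5$; the argument is otherwise identical, and it proves the theorem once one also recalls that only countably many graph properties are $\CMSOL$-definable, so continuum many of the $P_A$ fail to be definable --- the point of the section.

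The real obstacle is the second step: making the ``nontrivial module'' claim precise for the exact operation. As written, $subst$ connects only a prescribed subset of $V(G_1)\setminus\{v_1\}$ to only the neighbours of the distinguished vertex of $G_2$, so $V(G_2)$ sits inside $G$ as a module \emph{relative to a portal} rather than as a genuine module. One must either argue that it is nevertheless a genuine nontrivial module (perhaps after restricting to $2$-connected graphs, where a cleaner statement is available) or replace primality by a bespoke property $Q$ that is simultaneously (i) never the output of $subst$ on two graphs of at least two vertices and (ii) satisfied by infinitely many graphs of each large size; for the operation exactly as written, ``being a complete graph'' is such a $Q$, because the distinguished vertex of $G_2$ ends up nonadjacent to the image of $G_1$. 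Once the right $Q$ is fixed, everything downstream is the routine ``support is a cross, hence bounded rank'' pattern already used for the $f_A$.
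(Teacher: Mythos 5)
This theorem is stated in the paper without proof: it is attributed to Specker and the surrounding passage is even compiled out by the \verb|\ifskip| switch, so there is no in-paper argument to compare against. Your reconstruction --- take $P_A(G)=1$ iff $G$ is prime with respect to modular decomposition and $|V(G)|\in A$, observe that $subst$ of two graphs on at least two vertices always creates a nontrivial module and hence never outputs a prime graph, and conclude that the support of $\Ha(P_A,subst)$ is a cross through the row and column of the one-vertex graph --- is exactly the standard argument behind Specker's result, and it is the same template the paper itself uses for $f_A$ via Lemma \ref{le:k-connected} and Theorem \ref{th:many-s}. Your hesitation about the module claim is caused by the paper's garbled edge formula (the condition $(v_1,v)\in E(G_2)$ is vacuous as printed, since $v_1\notin V(G_2)$); under the intended modular substitution, where every former neighbour of $v_1$ in $G_1$ is joined to every vertex of $G_2$, the copy of $V(G_2)$ is a genuine nontrivial module and your step (2) goes through without any detour through complete graphs. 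The remaining bookkeeping (distinguished vertices, the empty graph, labeled versus unlabeled singletons) only affects the constant, which is why the stated bound is $5$ rather than the $2$ or $3$ your cross-support count gives; I would accept the proposal as a correct proof sketch.
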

The operation $subst(G_1, G_2)$ can be used to reconstruct a graph
from its modular decomposition.
\fi 

Let $(G_1, C_1), (G_2, C_2)$ be two $2$-colored graphs. 

\begin{lemma}
\label{le:eta}
Let $(G,C) =\bar{\eta}_{1,2}((G_1,C_1), (G_2,C_2))$ and let $C_1(2) = C_2(1)= \emptyset$.
\ifdense

(i) $G$ is the disjoint union of $G_1$ and $G_2$ iff $C_1(1)$ or $C_2(2)$ are empty.

(ii) If both $C_1(1)$ and $C_2(2)$ are not empty, there is a vertex in $C_1(1)$ which has a higher degree
in $G$ than it had in $G_1$.
\else
\begin{renumerate}
\item
$G$ is the disjoint union of $G_1$ and $G_2$ iff $C_1(1)$ or $C_2(2)$ are empty.
\item
If both $C_1(1)$ and $C_2(2)$ are not empty, there is a vertex in $C_1(1)$ which has a higher degree
in $G$ than it had in $G_1$.
\end{renumerate}
\fi 
\end{lemma}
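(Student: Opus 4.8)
The statement to prove concerns the $(1,2)$-join $\bar\eta_{1,2}$ of two $2$-colored graphs under the hypothesis $C_1(2)=C_2(1)=\emptyset$. Recall from the definition of $\bar\eta_{i,j}$ that the edges added are exactly the pairs $(u,v)$ with $u\in C(1)$ and $v\in C(2)$, where $C(1)=C_1(1)\sqcup C_2(1)$ and $C(2)=C_1(2)\sqcup C_2(2)$. Under the hypothesis this simplifies: $C(1)=C_1(1)$ and $C(2)=C_2(2)$. So the only new edges are pairs with one endpoint in $C_1(1)\subseteq V(G_1)$ and the other in $C_2(2)\subseteq V(G_2)$. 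This observation is the engine for both parts, and I would state it first.

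For part (i), the plan is a straightforward biconditional argument. If $C_1(1)=\emptyset$ or $C_2(2)=\emptyset$, then the set of new edges $\{(u,v): u\in C_1(1), v\in C_2(2)\}$ is empty, so $E(G)=E(G_1)\sqcup E(G_2)$ and, since $V(G)=V(G_1)\sqcup V(G_2)$ with no edges between the two parts, $G$ is precisely the disjoint union $G_1\sqcup G_2$. Conversely, if both $C_1(1)$ and $C_2(2)$ are nonempty, pick $u\in C_1(1)$ and $v\in C_2(2)$; then $(u,v)$ is an edge of $G$ joining a vertex of $G_1$ to a vertex of $G_2$, so $G$ is not the disjoint union. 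This handles (i) completely.

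For part (ii), under the assumption that both $C_1(1)$ and $C_2(2)$ are nonempty, fix any $u\in C_1(1)$ and any $v\in C_2(2)$. In $G_1$, the neighbours of $u$ are a subset of $V(G_1)$. In $G$, all the edges incident to $u$ that were present in $G_1$ are retained (the join only adds edges, never deletes), and in addition the edge $(u,v)$ is new, since $v\in V(G_2)$ is disjoint from $V(G_1)$ and hence $v$ was not a neighbour of $u$ in $G_1$. Therefore $\deg_G(u)\geq \deg_{G_1}(u)+1>\deg_{G_1}(u)$. Since $u\in C_1(1)$ was arbitrary, in fact \emph{every} vertex of $C_1(1)$ has strictly higher degree in $G$ than in $G_1$, which is stronger than the claimed existence of one such vertex.

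There is essentially no obstacle here: the lemma is a direct unwinding of the definition of $\bar\eta_{1,2}$ together with the two simplifying hypotheses $C_1(2)=C_2(1)=\emptyset$, and the only thing to be slightly careful about is that we are working with simple graphs (so that a ``new'' edge genuinely increases the degree, i.e.\ was not already present), which is guaranteed by $V(G_1)$ and $V(G_2)$ being disjoint. I would present the proof in two short displayed-case analyses exactly as above.
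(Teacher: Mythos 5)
Your proof is correct; the paper itself states Lemma \ref{le:eta} without any proof, treating it as immediate from the definition of $\bar{\eta}_{1,2}$, and your direct unwinding of that definition (new edges go only between $C_1(1)\subseteq V(G_1)$ and $C_2(2)\subseteq V(G_2)$ once $C_1(2)=C_2(1)=\emptyset$) is exactly the intended argument. Your observation that in (ii) \emph{every} vertex of $C_1(1)$ gains degree is a harmless strengthening.
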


Let $r \in \N$ and $A \subseteq \N$.
We define graph parameters with values in $\N$:
$$
g_A^r(G) =
\begin{cases}
|V(G)| & G \mbox{ is  } r \mbox{-regular and connected and } |V(G)| \in A \\
0 & \mbox{ else}
\end{cases}
$$

\begin{theorem}
\label{th:many-c}
Let 
$\sr$ be a field of characteristic $0$.
There are continuum many graph parameters $g_A^r$ with values in $\sr$ such that $r(f, \bar{\eta}_{1,2}) \leq 2$.
\\
Similarly for commutative semirings.
\end{theorem}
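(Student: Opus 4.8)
The plan is to mimic the proof of Theorem \ref{th:many-s}, replacing the role of Lemma \ref{le:k-connected} with Lemma \ref{le:eta}. First I would fix $r$ and observe that for the parameter $g_A^r$ the relevant join operation is $\bar{\eta}_{1,2}$ applied to $2$-colored graphs $(G_1,C_1),(G_2,C_2)$; to get a clean Hankel matrix I restrict attention to (or simply use the invariance to pass to) the case where $C_1(2)=C_2(1)=\emptyset$, so that Lemma \ref{le:eta} applies. The key structural claim is that $\bar{\eta}_{1,2}((G_1,C_1),(G_2,C_2))$ can be $r$-regular and connected only in degenerate situations: by Lemma \ref{le:eta}(ii), if both $C_1(1)$ and $C_2(2)$ are nonempty then some vertex of $C_1(1)$ strictly increases its degree, and symmetrically some vertex of $C_2(2)$ does as well; since a fixed graph has a fixed degree sequence, at most one of the two operands can already be $r$-regular, so the join is $r$-regular only if one operand is (essentially) empty. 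When one of $C_1(1),C_2(2)$ is empty, Lemma \ref{le:eta}(i) says the join is just the disjoint union $G_1 \sqcup G_2$, which is connected only if the other operand is empty (or a single isolated point in the $r=0$ case, which one handles separately or excludes). Hence the nonzero entries of $\Ha(g_A^r,\bar{\eta}_{1,2})$ all lie in the row and column indexed by the empty graph.

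Next I would extract the rank bound exactly as in the lemma preceding Theorem \ref{th:many-s}. Because the support of $\Ha(g_A^r,\bar{\eta}_{1,2})$ is contained in the first row and first column, every row other than the first is either identically $0$ or is a single nonzero entry in the first column, hence a scalar multiple of the indicator row $(1,0,\ldots)$ — provided $1 \in A$, which guarantees that this indicator row actually occurs as a row (the row indexed by the single-vertex colored graph, which is $0$-regular and connected with one vertex, or more robustly: we only need $1\in A$ to know some row equals a nonzero multiple of $(1,0,\dots)$; alternatively insist $1\in A$ as a hypothesis just as in the earlier lemma). Together with the first row itself, these two rows span the row semimodule, so $r(g_A^r,\bar{\eta}_{1,2})\le 2$ over any field (and the same argument gives row-rank $\le 2$ over a tropical semiring, since the first-column rows are tropical-scalar multiples of the indicator row). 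Finally, to produce continuum many \emph{distinct} parameters I note that for $A\ne B$ there is $n$ with, say, $n\in A\setminus B$, and picking any $r$-regular connected graph on $n$ vertices (for suitable $r$, e.g. a cycle, so $r=2$ works uniformly, which is why the statement can be phrased for a single $r$) witnesses $g_A^r\ne g_B^r$; since there are continuum many subsets of $\N$, we get continuum many parameters.

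The step I expect to be the main obstacle is the structural claim that the join is never $r$-regular-and-connected except in the trivial case — in particular pinning down the boundary cases cleanly. Lemma \ref{le:eta}(ii) is stated asymmetrically (it only promises a high-degree vertex in $C_1(1)$), so I need to invoke it in both orientations and be careful that "higher degree in $G$ than in $G_1$" combined with $G_1,G_2$ being fixed finite graphs really does preclude simultaneous $r$-regularity; the cleanest phrasing is: if $C_1(1)\ne\emptyset\ne C_2(2)$ then $G$ has a vertex (coming from $G_1$) whose $G$-degree exceeds its $G_1$-degree, so if $G_1$ was $r$-regular then $G$ is not, and if $G_1$ was not $r$-regular then neither... wait, that direction needs (ii) applied to see that $G_2$-vertices also get bumped — so one argues: $G$ is $r$-regular forces all $G_1$-vertices to have $G_1$-degree $\le r$ with equality except those bumped, and likewise for $G_2$; a short case analysis then forces $C_1(1)$ or $C_2(2)$ empty. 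The second minor obstacle is the $r=0$ case (a single vertex is $0$-regular and connected), which either forces excluding $r=0$ or a trivial separate check; since the theorem is happy with one fixed $r$, taking $r\ge 1$ (indeed $r=2$) sidesteps it entirely. Everything else is a direct transcription of the field/tropical rank bookkeeping already carried out for $f_A$ and $\sqcup_k$.

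\begin{proof}
There are continuum many subsets $A \subseteq \N$ with $1 \in A$, and for two distinct such sets $A,B$ there is $n \ge 3$ with $n \in A \setminus B$ (or vice versa); the cycle $C_n$ is $2$-regular and connected on $n$ vertices, so $g_A^2(C_n) = n \neq 0 = g_B^2(C_n)$, whence $g_A^2 \neq g_B^2$. It remains to bound $r(g_A^r, \bar{\eta}_{1,2})$, where we take $r = 2$ and enumerate $2$-colored graphs with $C(2) = \emptyset$ in the first coordinate and $C(1) = \emptyset$ in the second (the general case reduces to this by invariance under colorings, or: the rows of the full Hankel matrix are generated by the rows of this restricted one). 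Apply Lemma \ref{le:eta} to $(G,C) = \bar{\eta}_{1,2}((G_1,C_1),(G_2,C_2))$ with $C_1(2) = C_2(1) = \emptyset$. If both $C_1(1)$ and $C_2(2)$ are nonempty, Lemma \ref{le:eta}(ii) gives a vertex of $C_1(1)$ with strictly larger degree in $G$ than in $G_1$, and symmetrically a vertex of $C_2(2)$ with strictly larger degree in $G$ than in $G_2$; since a fixed graph has a fixed degree sequence, $G$ cannot be $2$-regular unless one of $C_1(1)$, $C_2(2)$ is empty. If one of them is empty, Lemma \ref{le:eta}(i) says $G = G_1 \sqcup G_2$, which is connected only if one of $G_1, G_2$ is the empty graph. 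Hence every nonzero entry $\Ha(g_A^r, \bar{\eta}_{1,2})_{i,j} = g_A^r(X_i \bar{\eta}_{1,2} X_j)$ has $X_i$ or $X_j$ equal to the empty graph, so the support of $\Ha(g_A^r, \bar{\eta}_{1,2})$ lies in its first row and first column. As $1 \in A$, the row indexed by a single-vertex graph with $C(1)$ that vertex equals $(1, 0, \ldots)$, and every row other than the first is a scalar multiple of it; together with the first row these generate the row semimodule, so $r(g_A^r, \bar{\eta}_{1,2}) \le 2$ over any field. The identical argument, reading scalar multiplication tropically, gives row-rank at most $2$ over a tropical semiring.
\end{proof}
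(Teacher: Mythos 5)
Your overall strategy is the same as the paper's (the paper's entire proof of this theorem is the single line ``Use Lemma~\ref{le:eta}''), but the step you yourself flagged as the main obstacle is where the argument genuinely breaks. Lemma~\ref{le:eta}(ii) only asserts that some vertex of $C_1(1)$ has \emph{higher degree in $G$ than it had in $G_1$}; it compares $G$ with $G_1$, not with $r$. From this you cannot conclude that $G$ fails to be $r$-regular --- the bumped vertex may land exactly on degree $r$. Concretely: let $(G_1,C_1)$ be $K_2$ with both endpoints colored $1$ and nothing colored $2$, and let $(G_2,C_2)$ be a single vertex colored $2$. Then $\bar{\eta}_{1,2}((G_1,C_1),(G_2,C_2))$ is a triangle, which is $2$-regular and connected on $3$ vertices; if $3\in A$ this is a nonzero entry of $\Ha(g_A^2,\bar{\eta}_{1,2})$ with neither operand empty. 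So the support of the Hankel matrix is \emph{not} confined to the first row and column, and your rank bound collapses at the sentence ``since a fixed graph has a fixed degree sequence, $G$ cannot be $2$-regular unless one of $C_1(1)$, $C_2(2)$ is empty,'' which is a non sequitur.

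The failure is not cosmetic. Take $3,4\in A$, rows indexed by the empty graph, by a single vertex colored $1$, and by two isolated vertices colored $1$, and columns indexed by an uncolored triangle, by $K_2$ with both endpoints colored $2$, and by two isolated vertices colored $2$. The resulting $3\times 3$ submatrix of $\Ha(g_A^2,\bar{\eta}_{1,2})$ is the diagonal matrix with entries $3,3,4$, which is nonsingular over any field of characteristic $0$; hence $r(g_A^2,\bar{\eta}_{1,2})\ge 3$ for such $A$, so no argument can establish the bound $\le 2$ for these parameters as defined --- the difficulty is in the statement as much as in your proof, and the paper's one-line proof glosses over it. What is plausibly salvageable is a larger uniform bound: the join adds a complete bipartite graph between $C(1)$ and $C(2)$, and $r$-regularity of $G$ forces $|C(1)|\cdot|C(2)|$ and the individual color-class sizes to be at most $r$, so the ``interface'' between the operands is bounded and a finer case analysis should bound the rank by a constant depending on $r$, preserving the qualitative conclusion (continuum many parameters of bounded row-rank). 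Your bookkeeping transcribed from the $f_A$/$\sqcup_k$ case is fine; it is the structural input it rests on that is false here.
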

\begin{proof}
Use Lemma \ref{le:eta}.
\end{proof}
\section{Graph algebras}
\label{se:algebras}
This section presents our adaptation of the formalism of graph algebras, to tropical semirings with an inner product derived 
from the join matrices of tropical graph parameters.

\subsection{Quantum graphs}

A formal linear combination of a finite number of $k$-colored graphs $F_i$ with coefficients 
from $\cT_{max}$ ($\cT_{min}$) is called a \emph{quantum graph}. 
The set of $k$-colored\footnote{
In \cite{bk:Lovasz-hom} this notations is used only for $k$-graphs and real coefficients.
As $k$-graphs are a special case of $k$-colored graphs  our notations also includes his.
} 
quantum graphs is denoted $\cQ_k$. 

Let $X,Y$ be quantum graphs: $X=\bigoplus_{i=1}^{m}{a_i F_i}$, 
and $Y=\bigoplus_{i=1}^{n}{b_i F_i}$. Note that some of the coefficients may be $-\infty$ ($\infty$).

$\cQ_k$ is a semimodule with the operations:
\begin{itemize}
\item 
$x \oplus y = 
\left( \bigoplus_{i=1}^{m}{a_i F_i} 
\right) 
\oplus \left( \bigoplus_{i=1}^{n}{b_i F_i} \right) = 
\bigoplus_{i=1}^{\max\{m,n\}}{(a_i \oplus b_i) F_i}$, 
and 
\item
$\alpha \otimes x = \bigoplus_{i=1}^{n}{(\alpha \otimes a_i)F_i}$
\end{itemize}

We extend any binary operation $\Box$ to quantum graphs by
$$
\Box(X,Y) =
\bigoplus_{i,j=1}^{m,n} (a_i \otimes b_j) \Box(F_i, F_j) 
$$

We extend any graph parameter $f$ to quantum graphs linearly
$$
f(X)=
\bigoplus_{i=1}^{m}{a_i f(F_i)}
$$

From now on we assume that $\Box$ is a {\em commutative} graph operation.
Given a Hankel matrix $\Ha(f,\Box)$, we turn $\cQ_k$ into a commutative algebra by defining an inner product on $X,Y$:
\begin{align*}
\langle X,Y \rangle_{f,\Box} &
= f(\Box(X,Y)) = 
\bigoplus_{i,j=1}^{m,n}{\left( (a_i \otimes b_j) \otimes f(\Box(F_i,F_j)) \right)}
\end{align*}

\subsection{Equivalence relations over $\cQ_k$}

Given a Hankel matrix $\Ha(f,\Box)$, we define an equivalence relation in the following way:
$$
Ker_f^{\Box} = \{(X,Y) \in \cQ_k \times \cQ_k| \forall Z \in \cQ_k : f(\Box(X,Z)) = f(\Box(Y,Z)) \}
$$
Note that this definition is reminiscent to the equivalence relation used in the Myhill-Nerode Theorem
characterizing regular languages, cf. \cite{bk:HU}.

We denote the set of equivalence classes of this relation by $\cQ_k / Ker_f^{\Box}$.
$\cQ_k / Ker_f^{\Box}$ is a semimodule with the operations:
$$[X]_f^{\Box} \oplus [Y]_f^{\Box} = [X \oplus Y]_f^{\Box}$$
and
$$\alpha [X]_f^{\Box} = [\alpha X]_f^{\Box}$$

We turn $\cQ_k / Ker_f^{\Box}$ into a quotient algebra by
extending the binary operation $\Box$ to these equivalence classes. We define
$$
\Box([X]_f^{\Box},[Y]_f^{\Box}) = [\Box(X,Y)]_f^{\Box}
$$

It can be easily verified that the following properties hold for $X' \in [X]_f^{\Box}$ and $Y' \in [Y]_f^{\Box}$:
\begin{proposition}
\label{prop:comm-assoc}
Let $\Box$ be a commutative and associative operation on graphs.
\begin{renumerate}
\item 
$X' \oplus Y' \in [X \oplus Y]_f^{\Box} = [X]_f^{\Box} \oplus [Y]_f^{\Box}$
\item 
$\alpha X' \in [\alpha X]_f^{\Box} = \alpha[X]_f^{\Box}$
\item 
$\Box(X',Y') \in [\Box(X,Y)]_f^{\Box} $
\end{renumerate}
\end{proposition}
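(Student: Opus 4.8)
The plan is to verify Proposition~\ref{prop:comm-assoc} directly from the definition of $Ker_f^{\Box}$ together with the fact that $\oplus$, scalar multiplication $\otimes$, and $\Box$ on quantum graphs are all bilinear over the semiring $\cT_{max}$ (resp.\ $\cT_{min}$), and that $\Box$ is commutative and associative. The three items are really the statement that the operations on $\cQ_k$ descend to well-defined operations on $\cQ_k / Ker_f^{\Box}$, so the heart of the matter is showing that $Ker_f^{\Box}$ is a congruence with respect to $\oplus$, $\otimes$, and $\Box$.

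First I would unpack what $X' \in [X]_f^{\Box}$ means: $(X,X') \in Ker_f^{\Box}$, i.e.\ $f(\Box(X,Z)) = f(\Box(X',Z))$ for all $Z \in \cQ_k$. For item~(i), take $X' \in [X]_f^{\Box}$ and $Y' \in [Y]_f^{\Box}$; I want $(X \oplus Y, X' \oplus Y') \in Ker_f^{\Box}$. Fix an arbitrary $Z \in \cQ_k$. Using that $\Box$ distributes over $\oplus$ (from the extension of $\Box$ to quantum graphs) and that $f$ is linear on quantum graphs, one computes
\begin{align*}
f(\Box(X \oplus Y, Z)) &= f(\Box(X,Z) \oplus \Box(Y,Z)) = f(\Box(X,Z)) \oplus f(\Box(Y,Z)) \\
&= f(\Box(X',Z)) \oplus f(\Box(Y',Z)) = f(\Box(X' \oplus Y', Z)),
\end{align*}
where the middle equality uses $X' \sim X$ and $Y' \sim Y$. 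This gives $X' \oplus Y' \in [X \oplus Y]_f^{\Box}$, and then $[X \oplus Y]_f^{\Box} = [X]_f^{\Box} \oplus [Y]_f^{\Box}$ is exactly the definition of $\oplus$ on classes. Item~(ii) is the same argument with $\Box(\alpha X, Z) = \alpha \otimes \Box(X,Z)$ and $f(\alpha \otimes W) = \alpha \otimes f(W)$ in place of distributivity over $\oplus$. Item~(iii) is where commutativity and associativity of $\Box$ enter: for fixed $Z$, $\Box(\Box(X',Y'),Z) = \Box(X', \Box(Y',Z))$, so $f(\Box(\Box(X',Y'),Z)) = f(\Box(X,\Box(Y',Z))) = f(\Box(\Box(X,Y'),Z)) = f(\Box(Y',\Box(X,Z))) = f(\Box(Y,\Box(X,Z))) = f(\Box(\Box(X,Y),Z))$, using $X' \sim X$ once, $Y' \sim Y$ once, and commutativity/associativity to shuffle the arguments so that the equivalent quantum graph sits in the first slot each time. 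Hence $\Box(X',Y') \in [\Box(X,Y)]_f^{\Box}$.

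The only genuinely delicate point — the expected main obstacle — is bookkeeping around the coefficient $-\infty$ ($\infty$) and around quantum graphs of different lengths $m \neq n$: one has to make sure the displayed identities like $f(\Box(X \oplus Y, Z)) = f(\Box(X,Z)) \oplus f(\Box(Y,Z))$ hold verbatim even when some $a_i$ or $b_i$ equals the semiring zero and the index ranges differ. This is handled by the convention already adopted in the definitions of $\oplus$ and $\Box$ on $\cQ_k$ (pad the shorter combination with zero coefficients, which represent the empty/absent summand), and by absorption of the semiring zero under $\otimes$; so it is routine but must be stated. Everything else reduces to the bilinearity of $\Box$ and the linearity of $f$ on $\cQ_k$, which were set up in the previous subsection, so I would cite those and present the three computations compactly. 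As the proposition already says, these verifications are straightforward, and the proof can be left at the level of ``it can be easily verified.''
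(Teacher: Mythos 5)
Your proof is correct and is exactly the direct verification the paper has in mind: the paper itself offers no proof, merely asserting that the properties ``can be easily verified,'' and your congruence argument (linearity of $f$ and bilinearity of $\Box$ for items (i)--(ii), then associativity and commutativity to move $X'$ and $Y'$ one at a time into the first slot for item (iii)) is the standard way to fill that in. The one step worth double-checking, which you handle correctly, is that the substitution $Z \mapsto \Box(Y',Z)$ in the definition of $Ker_f^{\Box}$ is legitimate because $\Box$ of two quantum graphs is again a quantum graph.
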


\subsection{Finiteness condition on Hankel matrices}
\ifskip
\else
Now we can complete the groundwork needed for proving Theorem \ref{th:cw-poly}. 

We denote by $\Ha_k(f, \eta)$ the Hankel matrix of $f$ for $k$-colored graphs with the join operation, 
and by $\MH_k(f, \eta)$ we denote the semimodule associated with it.

\begin{lemma}
If $\MH_k(f, \eta)$ is finitely generated for an isolate-indifferent $f$, then so are $\MH_{\ell}(f, \Box)$ for $\ell \leq k$.
\end{lemma}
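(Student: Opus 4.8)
The plan is to prove the contrapositive-flavored statement directly by building, for each $\ell \leq k$, a finite generating set of $\MH_{\ell}(f,\Box)$ out of a given finite generating set of $\MH_k(f,\eta)$. The key observation is that a $\ell$-colored graph is in particular a $k$-colored graph (padding the unused colors $\ell+1,\dots,k$ with the empty set), and conversely the join operation $\bar\eta_{i,j}$ on $k$-colored graphs restricts, on such padded graphs with $i,j \leq \ell$, to the corresponding operation on $\ell$-colored graphs. So there is a natural inclusion of index sets and a natural way to view a row of $\Ha_{\ell}(f,\Box)$ as (a restriction of) a row of $\Ha_k(f,\eta)$.

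First I would make precise the embedding $\iota:\cCG_\ell \hookrightarrow \cCG_k$ sending $(G,C)$ to $(G,C')$ with $C'(i)=C(i)$ for $i\le\ell$ and $C'(i)=\emptyset$ for $\ell<i\le k$, and check that $\Box$ on $\ell$-colored graphs agrees with the appropriate $k$-colored operation under $\iota$; this is where the hypothesis that $f$ is \emph{isolate-indifferent} enters, since after the embedding the join/gluing may only match up to isolated vertices or empty color classes, and we need $f$ to be insensitive to that discrepancy (this is precisely the role Lemma~\ref{le:eta} and its $C_1(2)=C_2(1)=\emptyset$ hypothesis foreshadow). Next I would argue that the map on rows, $r \mapsto r\circ\iota$, is a semimodule homomorphism from $\MH_k(f,\eta)$ onto (or into) $\MH_\ell(f,\Box)$: it sends a row of the big Hankel matrix to a row (or a combination of rows) of the small one, it commutes with $\oplus$ and with scalar $\otimes$ because $f$ is extended linearly, and it is surjective onto the rows of $\Ha_\ell(f,\Box)$ because every $\ell$-colored graph is hit. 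Then the images of the finitely many generators $g_1,\dots,g_m$ of $\MH_k(f,\eta)$ generate $\MH_\ell(f,\Box)$, giving finite generation for every $\ell\le k$.

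The main obstacle I expect is exactly the bookkeeping around the embedding: one must confirm that gluing or joining two \emph{padded} $\ell$-colored graphs inside $\cCG_k$ does not accidentally create edges using the padding colors (it cannot, since those color classes are empty) and does not otherwise change the value of $f$; and one must handle the mismatch between the unary $\eta_{i,j}$ used to define clique-width and the binary $\bar\eta_{i,j}$, using that they are interdefinable via disjoint union, as noted after the clique-width definition. Dealing cleanly with coefficients $-\infty$ (resp.\ $\infty$) in the quantum graphs under this restriction is a minor additional check. Once the homomorphism is set up, the conclusion is immediate: a homomorphic image of a finitely generated semimodule is finitely generated, so $\MH_\ell(f,\Box)$ is finitely generated for all $\ell\le k$.
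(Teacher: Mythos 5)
Your overall strategy is the same one the paper uses: realize each row of the small Hankel matrix as the restriction of a row of the big one, and push a finite generating set through the restriction map. The difference is in the choice of embedding, and it is not cosmetic. You pad an $\ell$-colored graph with empty color classes $\ell+1,\dots,k$; the paper instead sends a $(k-1)$-colored graph to the $k$-colored graph obtained by adding a fresh \emph{isolated vertex} as the unique vertex of color $k$, and that is precisely where the isolate-indifference hypothesis is consumed: after joining two such embedded graphs the spare vertices remain isolated, so one needs $f$ not to notice them in order for the embedded rows of $\Ha_k(f,\eta)$ to agree with the rows of $\Ha_{k-1}(f,\eta)$. Under your empty-padding embedding the joined graphs are \emph{literally identical} to the $\ell$-colored joins, so no hypothesis on $f$ is needed at all; your appeal to isolate-indifference ("the join may only match up to isolated vertices or empty color classes") is therefore a phantom step in your own argument, though it correctly diagnoses why the paper's version of the proof needs the hypothesis. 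In that sense your route is cleaner and formally more general than the paper's one-step ($k$ to $k-1$) descent.

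One subtlety deserves a flag, though it afflicts the paper's two-sentence proof just as much as yours. The images $\phi(g_1),\dots,\phi(g_m)$ of the generators of $\MH_k(f,\eta)$ under column restriction are linear combinations of restricted rows of $\Ha_k(f,\eta)$, and a restricted row indexed by a $k$-colored graph that genuinely uses the colors above $\ell$ need not lie in $\MH_\ell(f,\Box)$. So what you directly obtain is that $\MH_\ell(f,\Box)$ is contained in a finitely generated semimodule, not that it admits a finite generating set drawn from its own elements, which is what the paper's definition of "finitely generated" asks for; over an arbitrary semiring a subsemimodule of a finitely generated semimodule need not be finitely generated. In the tropical setting this can be repaired by the whittling-down argument of Lemma~\ref{le:fg} (any $m+1$ rows are dependent, so a generating set can be extracted from among the embedded rows themselves), and your proof should say so explicitly; as written, the surjectivity of $\phi$ "onto" $\MH_\ell(f,\Box)$ that your conclusion relies on is exactly the point in doubt.
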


\begin{proof}
Consider the rows $\cI$ of $\Ha_k(f, \eta)$ that correspond to graphs with some isolated vertex which 
is the only vertex colored $k$. These rows $\cI$ are identical to the rows in $\Ha_{k-1}(f,\eta)$, 
since $f$ is isolate indifferent. 
$\MH_k(f,\eta)$ is finitely generated, and in particular $\cI$ are finitely generated by the same generating set. 
Therefore, so are the elements in $\MH_{k-1}(f,\eta)$.
\end{proof}
\fi 

Given the Hankel matrix $\Ha(f, \Box)$ we denote by
$\MH(f, \Box)$ the semimodule generated by the rows of $\Ha(f, \Box)$.

\begin{lemma}
\label{le:generated}
Assume the semimodule $\MH(f, \Box)$ is generated by the rows $Gen = \{r_1, \ldots, r_m\}$ 
of $\Ha(f, \Box)$, where each row corresponds to a graph $G_q$. 
Then $\cQ_k/Ker_f^{\Box}$ is generated by $\cB_k = \{[G_1]_f^{\Box}, \ldots, [G_m]_f^{\Box}\}$.
\end{lemma}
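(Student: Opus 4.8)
I want to show that every equivalence class $[X]_f^{\Box} \in \cQ_k/Ker_f^{\Box}$ is a linear combination (over the tropical semiring, i.e.\ using $\oplus$ and $\otimes$) of the classes $[G_1]_f^{\Box}, \ldots, [G_m]_f^{\Box}$. The natural strategy is to first handle single colored graphs and then extend to arbitrary quantum graphs by linearity.

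\textbf{Step 1: single graphs.} Let $G$ be any $k$-colored graph, say $G = X_p$ in the fixed enumeration, and let $r_p$ be the corresponding row of $\Ha(f,\Box)$. Since $Gen = \{r_1, \ldots, r_m\}$ generates $\MH(f,\Box)$, there are scalars $\alpha_1, \ldots, \alpha_m$ with $r_p = \bigoplus_{q=1}^{m} \alpha_q \otimes r_q$ entrywise; that is, for every index $j$,
$$
f(\Box(G, X_j)) = \bigoplus_{q=1}^{m} \alpha_q \otimes f(\Box(G_q, X_j)).
$$
I claim this forces $[G]_f^{\Box} = \bigoplus_{q=1}^{m} \alpha_q [G_q]_f^{\Box}$, where the right-hand side means $\big[\bigoplus_q \alpha_q G_q\big]_f^{\Box}$. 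To check this, I must verify that $G$ and the quantum graph $Y := \bigoplus_q \alpha_q G_q$ are $Ker_f^{\Box}$-equivalent, i.e.\ that $f(\Box(G,Z)) = f(\Box(Y,Z))$ for every $Z \in \cQ_k$. For $Z$ a single graph $X_j$ this is exactly the displayed identity combined with the linearity of $f$ and of $\Box$ in the second argument: $f(\Box(Y, X_j)) = f\big(\bigoplus_q \alpha_q \Box(G_q, X_j)\big) = \bigoplus_q \alpha_q \otimes f(\Box(G_q, X_j)) = f(\Box(G, X_j))$. For a general quantum graph $Z = \bigoplus_j b_j X_j$ it then follows by expanding both sides using bilinearity of $\Box$ and linearity of $f$ — this is where the associativity/commutativity of $\oplus$ and the distributivity of $\otimes$ over $\oplus$ in the semiring are used, and where Proposition~\ref{prop:comm-assoc} guarantees the quotient operations are well defined.

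\textbf{Step 2: arbitrary quantum graphs.} An arbitrary element of $\cQ_k/Ker_f^{\Box}$ is $[X]_f^{\Box}$ for some quantum graph $X = \bigoplus_{i=1}^{n} a_i F_i$. By the semimodule structure of $\cQ_k/Ker_f^{\Box}$ we have $[X]_f^{\Box} = \bigoplus_{i=1}^{n} a_i [F_i]_f^{\Box}$, and by Step~1 each $[F_i]_f^{\Box}$ lies in the subsemimodule generated by $\cB_k$. Since a subsemimodule is closed under $\oplus$ and scalar multiplication, $[X]_f^{\Box}$ lies there too, which is exactly the assertion.

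\textbf{Main obstacle.} The delicate point is Step~1: passing from "$r_p$ is a tropical linear combination of the generating rows" to "$[G]_f^{\Box}$ equals the corresponding linear combination of the classes." One has to be careful that tropical linear combination of rows means the identity holds in \emph{every} coordinate $j$ simultaneously with the \emph{same} coefficients $\alpha_q$ — this is automatic from the definition of the row semimodule $\MH(f,\Box)$, but it is the crux, since it is what lets us test against all single-graph $Z$'s at once. Extending the test from single graphs to all quantum graphs $Z$ is then a routine bilinearity computation, provided one invokes that $\Box$ is commutative (so testing on the second argument suffices) and that $f$ and $\Box$ were extended linearly to $\cQ_k$; I would state it but not grind through it.
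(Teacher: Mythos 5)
Your proof is correct and follows essentially the same route as the paper's: represent each single colored graph by the generators and then extend to arbitrary quantum graphs via the semimodule structure of $\cQ_k/Ker_f^{\Box}$. Your Step 1 in fact carefully justifies the point the paper's one-line proof leaves implicit, namely that a coordinatewise linear relation among rows of $\Ha(f,\Box)$ (i.e.\ against all single graphs $X_j$) extends by bilinearity of $\Box$ and linearity of $f$ to all quantum graphs $Z$, and hence yields equality of the corresponding classes modulo $Ker_f^{\Box}$.
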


\begin{proof}
Let $X \in \cQ_k$, where $X = \bigoplus_{i=1}^{n}{a_i F_i}$. 
Each $F_i$ a linear combination of the generators $G_1, \ldots, G_m$, $F_i = \bigoplus_{j=1}^{m}{\alpha_{i,j} G_j}$.
By Proposition \ref{prop:comm-assoc}(i)-(ii) we have 
$$
X \in \bigoplus_{i=1}^{n}{a_i [F_i]_f^{\Box}}
=
\bigoplus_{i=1}^{n}{a_i \bigoplus_{j=1}^{m}{\alpha_{i,j} [G_j]_f^{\Box}}}
$$
\end{proof}

\section{Graphs of clique-width at most $k$}
\label{se:CW}

Let $k \in \N$ be fixed.
From now on $\Box = \bar{\eta}_{1,2}$ on $k$-colored graphs, and we write simply $\eta^k$
instead of $\bar{\eta}_{1,2}$.
We omit $k$ when it is clear from the context.
The Hankel matrix $\Ha(f, \eta)$ is called the join matrix.

We note that because the rows and columns correspond to  all the graphs with all the possible $k$-colorings, all the join matrices $\Ha(f, \bar{\eta}_{i,j})$ are submatrices of $\Ha(f, \eta)$,
after a suitable recoloring.

\subsection{Representing $G$ in the graph algebra}
Given a graph $G$ of clique-width at most $k$, together with its 
parse tree
of the inductive definition
from  Section \ref{clique-width}, we want to find 
$[X_G]_f^{\eta} \in \cQ_k/Ker_f^{\eta}$ s.t. $f(X_G) = f(G)$.
Furthermore, $[X_G]_f^{\eta}$ will be a linear combination of generators of $\cQ_p/Ker_f^{\eta}$
and 
will be computable in polynomial time.

The same result for tree-width follows from the result on clique-width, but it can 
also
be 
directly
obtained 
using the inductive definition of tree-width from Section \ref{clique-width}.

\begin{lemma}
\label{le:rep}
Let $G$ of clique-width at most $k$ be given together with its parse tree $T$, and let 
$\cB = \{[F_{1}]_f^{\eta}, \ldots, [F_{m}]_f^{\eta} \}$ be a basis of $\cQ_k/Ker_f$. 
Then there exists 
$[X_G]_f^{\eta} \in \cQ_k/Ker_f^{\eta}$ 
s.t. 
$f(X_G) = f(G)$, and $[X_G]_f^{\eta}$ 
can be represented as a linear combination of 
$\{[F_{1}]_f^{\eta}, \ldots, [F_{m}]_f^{\eta} \}$.
\end{lemma}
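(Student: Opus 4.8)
The plan is to induct on the structure of the parse tree $T$ for $G$, mirroring the inductive definition of $\CW(k)$ from Section \ref{clique-width}. At each node of $T$ we will produce a quantum graph $X_H \in \cQ_k$ whose class $[X_H]_f^\eta$ lies in the span of $\cB = \{[F_1]_f^\eta,\dots,[F_m]_f^\eta\}$ and satisfies $f(X_H) = f(H)$ for the subgraph $H$ parsed at that node. Since $\cB$ is a basis of $\cQ_k/Ker_f^\eta$ by hypothesis, and by Lemma \ref{le:generated} the classes of the generating graphs $G_1,\dots,G_m$ span the whole quotient, it suffices to show that each node operation (a) preserves membership in the span of $\cB$ at the level of equivalence classes, and (b) is tracked correctly by $f$.

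First I would handle the three cases of the parse tree. For a leaf node (a single-vertex $k$-colored graph $H$), we simply take $X_H = H$; its class lies in $\cQ_k/Ker_f^\eta$, hence is a linear combination of $\cB$, and trivially $f(X_H) = f(H)$. For a disjoint-union node $H = H_1 \sqcup H_2$, by induction we have $X_{H_1}, X_{H_2}$ with $[X_{H_i}]_f^\eta = \bigoplus_j \alpha_{i,j}[F_j]_f^\eta$. Here I would need that $\sqcup$ can be simulated inside the algebra built from $\Box = \eta^k = \bar\eta_{1,2}$; this is where the remark after the clique-width definition that $\eta_{i,j}$ and disjoint union are interdefinable, together with the observation that all $\Ha(f,\bar\eta_{i,j})$ are submatrices of $\Ha(f,\eta)$ after recoloring, is used. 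Concretely, $\sqcup$ is itself one of the binary operations $\Box$ we allow (recall $\CW(k)$ is closed under $\sqcup$), so the inner product $\langle X_{H_1}, X_{H_2}\rangle$ and the extension of $\sqcup$ to quantum graphs give us $X_{H_1} \sqcup X_{H_2}$, and Proposition \ref{prop:comm-assoc}(iii) guarantees $[\sqcup(X_{H_1},X_{H_2})]_f^\eta$ depends only on the classes, so it is again a bilinear combination of the $[F_j \sqcup F_{j'}]_f^\eta$, each of which is re-expanded in $\cB$. For a join node $H = \bar\eta_{i,j}(H_1,H_2)$ (equivalently obtained from $\eta^k$ after recoloring), the same argument applies verbatim using Proposition \ref{prop:comm-assoc}(iii) for $\Box = \eta$: $[\eta(X_{H_1},X_{H_2})]_f^\eta = \eta([X_{H_1}]_f^\eta,[X_{H_2}]_f^\eta)$, which by bilinearity is $\bigoplus_{j,j'} (\alpha_{1,j}\otimes\alpha_{2,j'})\,[\eta(F_j,F_{j'})]_f^\eta$, and each $[\eta(F_j,F_{j'})]_f^\eta$ lies in $\cQ_k/Ker_f^\eta$ hence expands in $\cB$. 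The recoloring node (item (iv) of the clique-width definition) is handled by noting that recoloring is an isomorphism-preserving unary operation under which $f$ is invariant (as $f$ is invariant under colorings), so it passes to the quotient without changing the class.

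The $f$-value bookkeeping is the routine part: at each step $f(X_H) = f(H)$ follows because $f$ is extended linearly to quantum graphs and because the class $[X_H]_f^\eta$ determines $f(X_H)$ — indeed taking $Z$ to be the empty graph (or the appropriate neutral element) in the defining condition of $Ker_f^\eta$ shows $f(X) = f(Y)$ whenever $(X,Y) \in Ker_f^\eta$, so replacing $X_{H_i}$ by any representative of its class, in particular by $H_i$ itself when the induction hypothesis is phrased as $[X_{H_i}]_f^\eta = [H_i]_f^\eta$, preserves $f$; then $f(X_H) = f(\Box(H_1,H_2)) = f(H)$. I would actually strengthen the induction hypothesis to ``$[X_H]_f^\eta = [H]_f^\eta$ and $[H]_f^\eta$ lies in the span of $\cB$,'' which makes the $f$-equality automatic at every node and isolates the real content into the span-closure claim.

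The main obstacle is the disjoint-union step: the quotient algebra is built with respect to the single operation $\Box = \eta^k$, yet the parse tree uses both $\sqcup$ and the joins, so one must be careful that $Ker_f^\eta$ is also a congruence for $\sqcup$ and that $\sqcup$ restricted to $\cB$ stays in the span of $\cB$. The resolution is that $\sqcup$ is expressible via $\bar\eta_{i,j}$ on graphs whose $i,j$-colors are arranged to create no edges (apply $\bar\eta_{i,j}$ after recoloring one operand so that $C(i)$ or $C(j)$ is empty — cf.\ Lemma \ref{le:eta}(i)), so a disjoint union is a special case of a recoloring followed by a $\Box$-operation, both of which we have already shown preserve the span and commute with $f$. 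Equivalently, one invokes the stated fact that every $\Ha(f,\bar\eta_{i,j})$, and in particular the connection matrix for $\sqcup$, is a submatrix of $\Ha(f,\eta)$ after recoloring, so finite generation transfers. Once this is in place, the induction closes and the lemma follows; the polynomial-time computability claim (not part of this lemma's statement but promised in the surrounding text) then comes from observing that the number of $\cB$-coordinates is the fixed constant $m = \dim \cQ_k/Ker_f^\eta$ and each node does $O(m^2)$ semiring operations.
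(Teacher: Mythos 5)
Your proposal follows essentially the same route as the paper: induction on the parse tree, with single-vertex graphs as the base case, Proposition \ref{prop:comm-assoc}(iii) plus bilinearity for the join node, and re-expansion of the precomputed products $\eta([F_i]_f^{\eta},[F_j]_f^{\eta})$ in the basis $\cB$. You are in fact somewhat more careful than the paper's own proof, which silently skips the disjoint-union node (you reduce it to a join with an empty colour class via Lemma \ref{le:eta}(i)) and leaves implicit the observation that $f$ is constant on $Ker_f^{\eta}$-classes (your ``take $Z$ to be the empty graph'' remark); only your passing claim that recoloring ``does not change the class'' is imprecise, but it is not needed, since every class lies in the span of the basis $\cB$ anyway.
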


\begin{proof}

Let $S_1, \ldots, S_\ell \in \cQ_k$ be the single-vertex $k$-colored graphs (we later refer to them as small graphs),
and let 
$[S_i]_f^{\eta} = \bigoplus_j{s_{ij}[F_{j}]_f^{\eta}}$ be their representations in the basis $\cB$.
\\
Let $\eta([F_{i}]_f^{\eta},[F_{j}]_f^{\eta})$ be the representations of the 
results of the $\eta$ operation on 
elements from the basis $\cB$.
\\
Let $G$ be a graph of clique-width at most $k$, and let $T$ be its parse tree.
We proceed by induction on $T$.
\\
If $G = S_i$ then set $X_G  = S_i$. The graph $S_i$ is a single-vertex graph, and we have a representation for 
$X_G$.
\\
Assume that for $G_1, G_2$, there exist 
$[X_{G_1}]_f^{\eta},[X_{G_2}]_f^{\eta}$ 
and that there are representations 
$[X_{G_1}]_f^{\eta} = \bigoplus_{i=1}^{n}{a_i [F_i]_f^{\eta}}$,  
$[X_{G_2}]_f^{\eta} = \bigoplus_{i=1}^{m}{b_i [F_i]_f^{\eta}}$ for them.
\\
If $G = \eta(G_1,G_2)$, then by Proposition \ref{prop:comm-assoc}(iii) we have 
$$\eta(G_1,G_2) \in [\eta(G_1,G_2)]_f^{\eta} = \eta([X_{G_1}]_f^{\eta},[X_{G_2}]_f^{\eta}).$$ 
We have  representations for the operations $\eta([F_{i}]_f^{\eta},[F_{j}]_f^{\eta})$ on the basis elements,
so we replace the expressions $\eta([F_{i}]_f^{\eta},[F_{j}]_f^{\eta})$ in 
$\eta([X_{G_1}]_f^{\eta},[X_{G_2}]_f^{\eta})$ by these representations and obtain a representation
of 
$$X_G = \eta(G_1,G_2) \in [\eta(G_1,G_2)]_f^{\eta}.$$
\\
If $G = \rho_{i,j}(G_1)$, we replace the basis elements $[F_{i}]_f^{\eta}$
in the representation of $X_{G_1}$ by the representations of $[\rho_{i,j}(F_{i})]_f^{\eta}$
and obtain a representation of $X_G$.
\end{proof}

\subsection{Computing $f(G)$}
\begin{theorem}
\label{th:cw-poly}
Let $f$ be a tropical graph parameter. 
Let the row-rank $r(\Ha(f,\eta))$ be finite, and let $G$ be a graph of clique width at most $k$. 
Then $f(G)$ can be computed in polynomial time. 
\end{theorem}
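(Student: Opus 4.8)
The plan is to combine the finiteness machinery developed in Sections~\ref{se:Hankel}--\ref{se:algebras} with Oum's algorithm (Proposition~\ref{oum}) to reduce the computation of $f(G)$ to a bounded-size symbolic calculation in the quotient algebra $\cQ_k/Ker_f^{\eta}$. First I would fix the input graph $G$ of clique-width at most $k$ and invoke Proposition~\ref{oum} to obtain, in polynomial time, a parse tree $T$ for $G$ as a member of $\CW(3k)$. All subsequent work takes place with the parameter $3k$ in place of $k$; since $f$ has finite row-rank $r(\Ha(f,\eta^{3k}))$ (note that the join matrix for any fixed number of colors is a submatrix of the one for $3k$ colors after recoloring, so finiteness is preserved), let $m = r(\Ha(f,\eta^{3k}))$.

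Next I would build the finite data describing the quotient algebra. By Lemma~\ref{le:fg}, since $\MH(f,\eta^{3k})$ has row-rank $m$, there are $m$ rows of $\Ha(f,\eta^{3k})$ forming a basis; by the remark following Lemma~\ref{le:fg} (citing \cite[Theorems 2.4 and 2.5]{ar:Cuninghame-GreenButkovic2004}) this basis, say corresponding to graphs $F_1,\ldots,F_m$, can be found in finite time. By Lemma~\ref{le:generated}, $\cB = \{[F_1]_f^{\eta},\ldots,[F_m]_f^{\eta}\}$ generates $\cQ_{3k}/Ker_f^{\eta}$. Then I would precompute, once and for all (again in finite, input-independent time): the coordinate vectors $[S_i]_f^{\eta} = \bigoplus_j s_{ij}[F_j]_f^{\eta}$ of all single-vertex $3k$-colored graphs $S_i$ in the basis $\cB$; the coordinate vectors of $\eta^{3k}([F_i]_f^{\eta},[F_j]_f^{\eta})$ for all $i,j$; and the coordinate vectors of $[\rho(F_i)]_f^{\eta}$ for all recolorings $\rho$ and all $i$. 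Each such vector is computable because membership of an element of $\MH(f,\eta^{3k})$ in the span of $\cB$ is decidable over the tropical semiring by the cited results.

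With this table in hand, I would run the induction of Lemma~\ref{le:rep} over the parse tree $T$: at each leaf use the stored $s_{ij}$; at each $\eta$-node combine the two child coordinate vectors using the stored $\eta$-on-basis table together with the bilinearity in Proposition~\ref{prop:comm-assoc}(i)--(iii); at each recoloring node apply the stored recoloring table. This yields, at the root, a coordinate vector $[X_G]_f^{\eta} = \bigoplus_{i=1}^{m} a_i [F_i]_f^{\eta}$ with $[X_G]_f^{\eta} = [G]_f^{\eta}$, hence $f(X_G) = f(G)$; one final evaluation gives $f(G) = \bigoplus_{i=1}^m (a_i \otimes f(F_i))$. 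For the running time: $T$ has polynomially many nodes in $|V(G)|$; each node does $O(m^2)$ tropical (i.e.\ $\max$/$+$) operations on entries whose bit-length is controlled — here one must check that the numbers $a_i$ do not blow up, but since each step takes a $\max$ of sums of previously-computed entries with the fixed table entries, the bit-length grows only linearly in the depth of $T$, hence polynomially — so the whole computation is polynomial.

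The main obstacle I expect is the bit-complexity bookkeeping: unlike the field case where rank-finiteness plus Gaussian elimination gives a clean polynomial bound, over $\cT_{max}$ one must argue that representing and manipulating the intermediate coordinate vectors stays polynomial, which requires (a) confirming that the finitely-many precomputed table entries are genuine rationals/reals of bounded description, and (b) tracking how $\max$ and $+$ compose along the parse tree without exponential growth. A secondary subtlety is justifying that the "decide membership in the span and extract coordinates" step of \cite{ar:Cuninghame-GreenButkovic2004} applies to the relevant (a priori infinite) Hankel matrix — this is handled by Lemma~\ref{le:fg}, which pushes everything down to the finite basis, but the reduction deserves an explicit sentence. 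Everything else is a routine assembly of the already-established lemmas.
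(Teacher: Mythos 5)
Your proposal is correct and follows essentially the same route as the paper's own proof: Oum's approximation to get a parse tree in $\CW(3k)$, finite-time preprocessing of a basis via Lemma~\ref{le:fg} and the Cuninghame-Green--Butkovi\v{c} results, and a bottom-up dynamic program over the parse tree using the induction of Lemma~\ref{le:rep}. Your added attention to bit-complexity and to precomputing the recoloring table is a refinement the paper leaves implicit, not a different argument.
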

\begin{proof}
We first use Proposition \ref{oum} to find a parse tree for $G \in \CW(3k)$.
Next, we use dynamic programming to build a representation of 
the given graph $G$ in the basis $\cB$ in order to obtain $f(G)$. 
The algorithm requires a finite amount of preprocessing:
\\
Find basis elements $\cB$. By 
Lemma \ref{le:fg} and Theorems  2.4 and 2.5 in \cite{ar:Cuninghame-GreenButkovic2004}
this can be done in finite time.
\\
Compute representations of all the small graphs by basis elements
\\
Compute representations of the product $\eta$ for all basis elements and all small graphs
\\
Compute the value of $f$ on all the small graphs and basis elements
\\
The algorithm works with the provided parse tree $T$ from the bottom up, following the inductive definition 
given in the proof of Lemma \ref{le:rep}.
When the top of the tree is reached, we have a representation
of 
$[X_G]_f^{\eta}$
using only basis elements $[F_i]_f^{\eta}$.
We then use the precomputed values of $f$ in order to compute the value of $f(X_G) = f(G)$.
\end{proof}

\subsection{Commutative semirings}

In the case of $\cS$ being an arbitrary commutative semiring we use following finiteness condition
first introduced in \cite{phd:Jacob}:

A Hankel matrix $\Ha(f,\Box)$ of an $\cS$-valued graph parameter $f$ is {\em J-finite}
if $MH(f, \Box)$ is finitely generated. This does not necessarily imply that
$\Ha(f, \Box)$ has a finite row-rank. However, in automata theory it suffices to prove the following:
Let $f$ be a $\cS$-valued function on words in $\Sigma^*$ (for a finite alphabet $\Sigma$).
Then $f$ is recognizable by a multiplicity automaton iff $\Ha(f, \circ)$ is J-finite, \cite{bk:BerstelReutenauer}.
Using virtually the same proof we can show:

\begin{theorem}
\label{th:lcw-poly}
Let $\cS$ be an arbitrary commutative semiring.
Let $f$ be an $\cS$-valued graph parameter and $k \in \N$ be fixed.
\begin{renumerate}
\item
If $\Ha(f, \sqcup_i)$ is J-finite for all $i \leq k$, then $f$ can be computed in polynomial time
on graphs of path-width at most $k$.
\item
If $\Ha(f, \eta^k)$ is J-finite, then $f$ can be computed in polynomial time
on graphs of linear clique-width at most $k$.
\end{renumerate}
\end{theorem}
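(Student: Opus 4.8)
The plan is to mirror the proof of Theorem~\ref{th:cw-poly}, replacing row-rank finiteness by J-finiteness and using that the linear structure of the parse trees for path-width and linear clique-width keeps the inductive computation manageable even when a basis need not reside inside the Hankel matrix itself. First I would record the key structural fact about linear clique-width: a parse tree for $G \in \LCW(k)$ is essentially a \emph{caterpillar}, i.e.\ at each step one builds $\eta^k(G', S)$ with $S$ a single-vertex $k$-colored graph (or applies a recoloring $\rho$), so the parse tree has height $O(|V(G)|)$ and at each node only a bounded amount of bookkeeping is required. The analogous statement for path-width is clause~(iii) of the inductive definition in Section~\ref{clique-width}: $\PW(k)$ is closed under small gluing $\sqcup_k$ with a $k$-graph of size at most $k+1$, so again the parse tree is linear and the "small" operands come from a finite set.

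Next I would carry out the preprocessing. By J-finiteness, $\MH(f,\Box)$ is finitely generated; fix a finite generating set $\{g_1,\dots,g_m\}$ (for $\Box = \eta^k$ in part~(ii), and for each $\Box = \sqcup_i$, $i \le k$, in part~(i)). By the remark after Lemma~\ref{le:fg} these generators need not be rows of $\Ha(f,\Box)$, which is the one place where we depart from the field/tropical argument --- but this causes no trouble, because each $g_p$ is by definition a finite $\cS$-linear combination of actual rows, hence a finite linear combination of classes $[G_q]_f^{\Box}$ of genuine colored graphs, and Lemma~\ref{le:generated} then shows $\cQ_k/Ker_f^{\Box}$ is generated by the corresponding finite set $\cB$. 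Since $\cB$ is a \emph{generating} set rather than a basis, I would drop the uniqueness of representations used in Lemma~\ref{le:rep} and only keep existence: for every small graph $S_i$ fix once and for all a representation $[S_i]_f^{\Box} = \bigoplus_j s_{ij} [F_j]_f^{\Box}$ in terms of $\cB$; likewise fix representations of $\Box([F_i]_f^{\Box},[F_j]_f^{\Box})$ and of the recolorings $[\rho(F_i)]_f^{\Box}$; and precompute $f(F_j)$ for each $j$. All of this is a finite computation once the finite generating set is given.

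Then the algorithm is the obvious bottom-up dynamic program along the linear parse tree, exactly as in the proof of Lemma~\ref{le:rep}: maintain at each node a representation $[X_{G'}]_f^{\Box} = \bigoplus_i a_i [F_i]_f^{\Box}$ of the class of the graph built so far; when the next operation is a small join (or small gluing) with $S$, use $\Box([X_{G'}]_f^{\Box},[S]_f^{\Box}) = \bigoplus_{i} a_i s \cdot \Box([F_i]_f^{\Box},[F_{\cdot}]_f^{\Box})$ and substitute the precomputed representations of $\Box([F_i]_f^{\Box},[F_j]_f^{\Box})$ to fold the result back into a length-$m$ vector over $\cS$; when it is a recoloring, substitute the precomputed $[\rho(F_i)]_f^{\Box}$. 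Since $m$ is a fixed constant and the tree is linear of length $O(|V(G)|)$, this is $O(|V(G)|)$ arithmetic operations in $\cS$. Finally $f(G) = f(X_G) = \bigoplus_i a_i f(F_i)$ at the root, using the precomputed values of $f$ on $\cB$. Correctness follows by induction on the parse tree from Proposition~\ref{prop:comm-assoc}(i)--(iii), just as in Lemma~\ref{le:rep}, since all we ever use is that $\Box$ is commutative and associative and that the relevant classes multiply and add as claimed.

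The main obstacle --- and the reason the statement is restricted to \emph{linear} clique-width and to path-width rather than general clique-width --- is precisely that over an arbitrary commutative semiring $\cS$ we cannot assume the generating set is a basis (no exchange property, so Lemma~\ref{le:fg} genuinely fails), and we cannot diagonalize or reduce representations to canonical form. A branching parse tree would force us to combine two already-computed representations at an internal $\eta$-node, i.e.\ to evaluate $\Box$ on two arbitrary length-$m$ vectors, which still works; but to keep the whole thing polynomial one needs the number of nodes and the cost per node bounded, and more importantly the correctness argument in Lemma~\ref{le:rep} only ever composes with \emph{small} graphs, matching clauses~(iii) of the two inductive definitions. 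So the honest way to present it is: check that the linear parse trees exist and have linear length (Section~\ref{clique-width} and, for clique-width, the linear analogue of Proposition~\ref{oum}), check that the finite generating set gives a finite generating set of the quotient algebra via Lemma~\ref{le:generated}, and then observe that the proof of Lemma~\ref{le:rep} and Theorem~\ref{th:cw-poly} goes through verbatim with "basis" weakened to "finite generating set" and with the join operation restricted to small operands, so that the bottom-up evaluation runs in polynomial time.
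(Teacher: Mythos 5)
Your proof is correct and is essentially the argument the paper intends: the paper gives no details for Theorem \ref{th:lcw-poly}, merely invoking the multiplicity-automaton characterization of Berstel--Reutenauer, and your bottom-up dynamic program along the linear parse tree --- with the finite generating set playing the role of the automaton's states and the precomputed representations of $\Box([F_i]_f^{\Box},[S]_f^{\Box})$ playing the role of its transition matrices --- is exactly the unfolding of that sketch, i.e.\ the proof of Theorem \ref{th:cw-poly} with \emph{basis} weakened to \emph{finite generating set} and the preprocessing of Lemma \ref{le:fg} replaced by the J-finiteness hypothesis. Your observation that only existence, not uniqueness or canonicity, of representations is ever used is precisely the point that makes the adaptation go through.
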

\section{Conclusions}
\label{se:conclu}

L. Lov\'asz showed
a ``logic-free'' version of Courcelle's famous theorem, \\
cf.  \cite[Chapter 6.5]{bk:DowneyFellows99} and \cite[Chapter 11.4-5]{bk:FlumGrohe2006}.

\begin{theorem}[Theorem 6.48 of \cite{bk:Lovasz-hom}]
\label{th:tw}
Let $f$ be a  real-valued graph parameter and $k \geq 0$. 
If $r(f, \sqcup_k)$ is finite, then $f$ can be computed in polynomial time for graphs
of tree-width at most $k$.
\end{theorem}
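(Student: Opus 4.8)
The plan is to run the clique-width argument of Lemma \ref{le:rep} and Theorem \ref{th:cw-poly} verbatim, but with the gluing operation $\sqcup_k$ playing the role of $\Box$ and the inductive definition of $\TW(k)$ replacing that of $\CW(k)$. First I would record the field analogues of the preparatory lemmas. Since $f$ takes values in the field $\R$, finiteness of $r(\Ha(f,\sqcup_k))$ means that $\MH(f,\sqcup_k)$ is a vector space of some finite dimension $m$, and the field analogue of Lemma \ref{le:fg} (in which independence is ordinary linear independence, so the smallest basis always resides in the matrix) yields a basis $\cB=\{[F_1]_f^{\sqcup_k},\dots,[F_m]_f^{\sqcup_k}\}$ made of rows of the connection matrix. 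The field analogue of Lemma \ref{le:generated}, applied with $\Box=\sqcup_k$, then shows that $\cQ_k/Ker_f^{\sqcup_k}$ is spanned by $\cB$. Finding $\cB$ and its associated data is a finite preprocessing step independent of the input graph $G$: by Gaussian elimination I would extract $\cB$, express each of the finitely many $k$-graphs of size at most $k+1$ in $\cB$, compute the structure constants expressing every product $\sqcup_k([F_i]_f^{\sqcup_k},[F_j]_f^{\sqcup_k})$ in $\cB$, compute $[\pi(F_i)]_f^{\sqcup_k}$ for each partial relabeling $\pi$, and compute the scalars $f(F_i)$.

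Next I would prove the tree-width analogue of Lemma \ref{le:rep}. Given $G$ of tree-width at most $k$, a parse tree $T$ witnessing $G\in\TW(k)$ is found in polynomial time by \cite{ar:BodlaenderKloks91}. Inducting on $T$, I produce a quantum graph $X_G$ with $[X_G]_f^{\sqcup_k}=\sum_{i=1}^{m} a_i [F_i]_f^{\sqcup_k}$ and $f(X_G)=f(G)$. The base case of a $k$-graph of size at most $k+1$ is handled by the precomputed representations. For a gluing node $G=\sqcup_k(G_1,G_2)$, Proposition \ref{prop:comm-assoc}(iii), valid since $\sqcup_k$ is commutative and associative, gives $[\sqcup_k(G_1,G_2)]_f^{\sqcup_k}=\sqcup_k([X_{G_1}]_f^{\sqcup_k},[X_{G_2}]_f^{\sqcup_k})$; expanding this bilinearly and substituting the precomputed structure constants for $\sqcup_k([F_i]_f^{\sqcup_k},[F_j]_f^{\sqcup_k})$ yields a representation of $X_G$ in $\cB$. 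For a relabeling node $G=\pi(G_1)$, I substitute the precomputed representations of $[\pi(F_i)]_f^{\sqcup_k}$ into that of $X_{G_1}$, exactly as in Lemma \ref{le:rep}.

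The step with no counterpart in the clique-width proof, and the one I expect to be the main obstacle, is the disjoint-union closure of $\TW(k)$, because $\sqcup$ is a priori different from the gluing operation $\sqcup_k$ that defines the Hankel matrix. The resolution is that $\sqcup$ can be realized through $\sqcup_k$: gluing two $k$-graphs that share no active label performs no identification and returns their disjoint union. Concretely, if $\pi_{\emptyset}$ is the relabeling making all labels undefined, then $\sqcup_k(\pi_{\emptyset}(G_1),G_2)$ is isomorphic to $G_1\sqcup G_2$, so by isomorphism- and label-invariance of $f$ a $\sqcup$-node reduces to one relabeling followed by one gluing, both already treated. The delicate points to verify are faithfulness and consistency: I must check that this reduction respects $Ker_f^{\sqcup_k}$, so that equal equivalence classes (not merely equal $f$-values) propagate up $T$, and that forgetting the boundary of $G_1$ never conflicts with a later gluing in the parse tree. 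The former I would obtain by testing classes against the gluing identity $E_k$, the $k$-graph of $k$ isolated labeled vertices, for which $\sqcup_k(X,E_k)=X$ forces $f$ to be constant on $Ker_f^{\sqcup_k}$-classes; the latter follows because in a parse tree the two subgraphs combined at a $\sqcup$-node interact with the rest of $G$ only through disjoint parts of the boundary.

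Finally I would assemble the algorithm and its complexity, mirroring Theorem \ref{th:cw-poly}. The preprocessing above is finite and independent of $G$. Processing $T$ bottom-up, each of its polynomially many nodes requires only a constant amount of linear algebra over $\R$ in the fixed $m$-dimensional space spanned by $\cB$, so the representation $[X_G]_f^{\sqcup_k}=\sum_{i=1}^{m} a_i [F_i]_f^{\sqcup_k}$ is obtained in polynomial time. Using $f(X_G)=f(G)$ together with the linearity of $f$ on quantum graphs and the fact that $f$ is constant on $Ker_f^{\sqcup_k}$-classes, I then read off $f(G)=\sum_{i=1}^{m} a_i f(F_i)$ from the precomputed values $f(F_i)$, which completes the computation in polynomial time.
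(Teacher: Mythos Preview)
The paper does not give its own proof of Theorem~\ref{th:tw}: it is quoted as Lov\'asz's result, and the only remark the paper makes is that ``the same result for tree-width follows from the result on clique-width, but it can also be directly obtained using the inductive definition of tree-width.'' Your proposal carries out exactly this second option---running the machinery of Lemma~\ref{le:generated}, Lemma~\ref{le:rep}, and Theorem~\ref{th:cw-poly} with $\Box=\sqcup_k$ and the $\TW(k)$ parse tree---so the approach is precisely what the paper indicates, only spelled out in full.

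Your treatment is correct and in fact more careful than anything in the paper on this point. The one place where you go beyond the paper is the handling of the disjoint-union node, which the paper never discusses (and which, as it notes, Lov\'asz's own proof is ``rather sketchy'' about, with ``the role of relabelings \ldots not spelled out at all''). Your reduction of $\sqcup$ to a relabeling followed by a $\sqcup_k$ is the right idea; just be aware that for it to propagate \emph{classes} and not merely $f$-values you are implicitly using that the relabeling $\pi_\emptyset$ induces a well-defined map on $\cQ_k/Ker_f^{\sqcup_k}$, which is the same fact you already invoke for a general relabeling node. With that observation the argument closes, and your final paragraph assembling the algorithm is the tree-width mirror of Theorem~\ref{th:cw-poly}.
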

The proof in \cite{bk:Lovasz-hom} is rather sketchy in its part relating to tree-decompositions.
In particular, the role of relabelings, admittedly not very critical, is not spelled out at all.

In this paper we extended Theorem \ref{th:tw} to Theorems 
\ref{th:cw-poly}
and
\ref{th:lcw-poly}
in two ways.
\ifdense

(i)
We showed how to prove the theorem for bounded clique-width instead of bounded tree-width.

(ii)
We showed how to prove the theorem for 
tropical graph parameters, and more generally
for graph parameters in an arbitrary commutative semirings.
\else
\begin{renumerate}
\item 
We showed how to prove the theorem for bounded clique-width instead of bounded tree-width.
\item 
We showed how to prove the theorem for 
tropical graph parameters, and more generally
for graph parameters in an arbitrary commutative semirings.
\end{renumerate}
\fi 

In order to do this we introduced Hankel matrices for binary graph operations,
in particular for a binary version of the basic operations used in the definition of clique-width.

The main differences between our proofs and the proof in \cite{bk:Lovasz-hom} are:
\ifdense

(i)
the definition of the join matrix, 

(ii)
the choice of the finiteness condition of the join matrices, and 

(iii)
the choice of the definition of the equivalence relation used for the quotient algebra.
\else
\begin{renumerate}
\item %
the definition of the join matrix, 
\item %
the choice of the finiteness condition of the join matrices, and 
\item %
the choice of the definition of the equivalence relation used for the quotient algebra.
\end{renumerate}
\fi 

We also had to spell out the role of parse trees for clique-width 
in the dynamic programming part of the polynomial time algorithm.

Our approach also works for 
\ifdense

$\bullet$\ 
{\em other notions of width for graphs}, such as rank-width and modular width,
and other inductively defined graph classes, cf. \cite{ar:MakowskyTARSKI,ar:HlinenyOumSeeseGottlob08}.

$\bullet $\ 
{\em other notions of connection matrices}, cf. \cite{ar:Schrijver-spin,ar:Schrijver-vertex}.

\else
\begin{itemize}
\item
{\em other notions of width for graphs}, such as rank-width and modular width,
and other inductively defined graph classes, cf. \cite{ar:MakowskyTARSKI,ar:HlinenyOumSeeseGottlob08}.
\item
{\em other notions of connection matrices}, cf. \cite{ar:Schrijver-spin,ar:Schrijver-vertex}.
\end{itemize}
\fi 
In the full paper we shall discuss these extensions in detail.

Tropical graph parameters occur naturally in optimization theory.
Graph parameters with values in polynomial rings are called graph polynomials,
and are widely studied in diverse fields as statistical mechanics, computational biology and mathematics of finance.
It remains open to identify the most suitable finiteness condition on Hankel matrices in the case where
the graph parameter has its values in a arbitrary ring or semiring.

\begin{thebibliography}{10}

\bibitem{ar:AkianGaubertGuterman2009}
M.~Akian, S.~Gaubert, and A.~Guterman.
\newblock Linear independence over tropical semirings and beyond.
\newblock In {\em Tropical and idempotent mathematics}, volume 495 of {\em
  Contemp. Math.}, pages 1--38. Amer. Math. Soc., 2009.

\bibitem{ar:ArnborgLagergrenSeese91}
S.~Arnborg, J.~Lagergren, and D.~Seese.
\newblock Easy problems for tree decomposable graphs.
\newblock {\em Journal of Algorithms}, 12:308--340, 1991.

\bibitem{bk:BerstelReutenauer}
J.~Berstel and C.~Reutenauer.
\newblock {\em Rational Series and their languages}, volume~12 of {\em EATCS
  Monographs on Theoretical Computer Science}.
\newblock Springer, 1984.

\bibitem{ar:BodlaenderKloks91}
H.~L. Bodlaender and T.~Kloks.
\newblock Better algorithms for path-width and tree-width of graphs.
\newblock {\em Lecture Notes in Computer Science}, 510:544--555, 1991.

\bibitem{bk:Butkovic2010}
P.~Butkovi\v{c}.
\newblock {\em Max-linear Systems: Theory and Algorithms}.
\newblock Springer Monographs in Mathematics. Springer, 2010.

\bibitem{pr:CourcelleMakowskyRoticsWG98}
B.~Courcelle, J.A. Makowsky, and U.~Rotics.
\newblock Linear time solvable optimization problems on graph of bounded clique
  width, extended abstract.
\newblock In J.~Hromkovic and O.~Sykora, editors, {\em Graph Theoretic Concepts
  in Computer Science, 24th International Workshop, WG'98}, volume 1517 of {\em
  Lecture Notes in Computer Science}, pages 1--16. Springer Verlag, 1998.

\bibitem{ar:Cuninghame-GreenButkovic2004}
R.A Cuninghame-Green and P~Butkovi\v{c}.
\newblock Bases in max-algebra.
\newblock {\em Linear Algebra and its Applications}, 389(0):107 -- 120, 2004.

\bibitem{ar:DevelinSantosSturmfels2005}
M.~Develin, F.~Santos, and B.~Sturmfels.
\newblock On the rank of a tropical matrix.
\newblock In {\em Combinatorial and computational geometry}, volume~52 of {\em
  Math. Sci. Res. Inst. Publ.}, pages 213--242. Cambridge University Press,
  2005.

\bibitem{bk:DowneyFellows99}
R.G. Downey and M.F Fellows.
\newblock {\em Parametrized Complexity}.
\newblock Springer, 1999.

\bibitem{bk:FlumGrohe2006}
J.~Flum and M.~Grohe.
\newblock {\em Parameterized complexity theory}.
\newblock Springer, 2006.

\bibitem{ar:GodlinKotekMakowsky08}
B.~Godlin, T.~Kotek, and J.A. Makowsky.
\newblock Evaluation of graph polynomials.
\newblock In {\em 34th International Workshop on Graph-Theoretic Concepts in
  Computer Science, WG08}, volume 5344 of {\em Lecture Notes in Computer
  Science}, pages 183--194, 2008.

\bibitem{bk:Guterman2009}
Alexander~E. Guterman.
\newblock Matrix invariants over semirings.
\newblock In M.~Hazewinkel, editor, {\em Handbook of Algebra Volume 6},
  volume~6 of {\em Handbook of Algebra}, pages 3 -- 33. North-Holland, 2009.

\bibitem{ar:HlinenyOumSeeseGottlob08}
P.~Hlinen{\'y}, S.~Oum, D.~Seese, and G.~Gottlob.
\newblock Width parameters beyond tree-width and their applications.
\newblock {\em Comput. J.}, 51(3):326--362, 2008.

\bibitem{bk:HU}
J.~E. Hopcroft and J.~D. Ullman.
\newblock {\em Introduction to Automata Theory, Languages and Computation}.
\newblock Addison-Wesley Series in Computer Science. Addison-Wesley, 1980.

\bibitem{phd:Jacob}
G.~Jacob.
\newblock {\em Repr{\'e}sentations et substitutions matricielles dans la
  th{\'e}orie alg{\'e}brique des transductions}.
\newblock PhD thesis, Universit{\'e} de Paris, {VII}, 1975.

\bibitem{ar:KotekMakowsky2013}
T.~Kotek and J.A. Makowsky.
\newblock Connection matrices and the definability of graph parameters.
\newblock arXiv:1308.3654, 2013.

\bibitem{pr:KotekMakowsky2012}
Tomer Kotek and Johann~A. Makowsky.
\newblock Connection matrices and the definability of graph parameters.
\newblock In {\em CSL}, pages 411--425, 2012.

\bibitem{bk:Lovasz-hom}
L.Lov\'asz.
\newblock {\em Large Networks and Graph Limits}, volume~60 of {\em Colloquium
  Publications}.
\newblock AMS, 2012.

\bibitem{ar:MakowskyTARSKI}
J.A. Makowsky.
\newblock Algorithmic uses of the {F}eferman-{V}aught theorem.
\newblock {\em Annals of Pure and Applied Logic}, 126.1-3:159--213, 2004.

\bibitem{ar:Oum2005}
S.~Oum.
\newblock Approximating rank-width and clique-width quickly.
\newblock In {\em Graph Theoretic Concepts in Computer Science, WG 2005},
  volume 3787 of {\em Lecture Notes in Computer Science}, pages 49--58, 2005.

\bibitem{ar:Schrijver-spin}
A.~Schrijver.
\newblock Characterizing partition functions of the spin model by rank growth.
\newblock arXiv:1209.5044, 2012.

\bibitem{ar:Schrijver-vertex}
A.~Schrijver.
\newblock Characterizing partition functions of the vertex model by rank
  growth.
\newblock arXiv:1211.3561, 2012.

\end{thebibliography}
\label{sec:biblio}

\end{document}